\theoremstyle{plain}
\newtheorem{example}{Example}
\newtheorem{theorem}{Theorem}
\newtheorem{corollary}[theorem]{Corollary}
\newtheorem{lemma}[theorem]{Lemma}
\newtheorem{proposition}[theorem]{Proposition}
\theoremstyle{definition}
\newtheorem{definition}[theorem]{Definition}
\theoremstyle{remark}
\newtheorem{remark}[theorem]{Remark}
\DeclarePairedDelimiter{\norm}{\lVert}{\rVert}
\newcommand{\half}{\frac{1}{2}}
\newcommand{\Rd}{\mathbb{R}^d}
\newcommand{\RN}{\mathbb{R}^N}
\newcommand{\RM}{\mathbb{R}^M}
\newcommand{\RK}{\mathbb{R}^K}
\newcommand{\RdxRd}{\Rd \times \Rd}
\newcommand{\ip}[2]{\langle #1\ , #2 \,\rangle}
\newcommand{\Sum}[2]{\sum_{#1=1}^{#2}}
\newcommand{\Tr}{\textrm{Tr}}
\newcommand{\PF}{\textrm{PF}(\Rd)}
\newcommand{\PFAB}{\textrm{PF}(A,B,\Rd)}
\newcommand{\supp}{\textrm{supp}}
\newcommand{\Span}{\textrm{span}}
\newcommand{\indicator}[1]{\mathbbm{1}_{\left[ {#1} \right] }}
\newcommand{\Vor}{\textrm{Vor}}
\newcommand{\R}{\mathbb{R}}
\newcommand{\N}{\mathbb{N}}
\newcommand{\LL}[2]{L^2(#1,#2)}
\newcommand{\ixset}[1]{\{1,\cdots, #1\}}
\begin{document}


\title{Duality and Geodesics for Probabilistic Frames}

\author{Clare Wickman}
\affil{Johns Hopkins University Applied Physics Laboratory}

\author{Kasso Okoudjou}
\affil{Department of Mathematics, University of Maryland, College Park}
\date{\vspace{-5ex}}

\maketitle

{\bf Keywords:} frames, probabilistic frames, optimal transport, Wasserstein metric, duality

\begin{abstract} Probabilistic frames are a generalization of finite frames into the Wasserstein space of probability measures with finite second moment. We introduce new probabilistic definitions of duality, analysis, and synthesis and investigate their properties. In particular, we formulate a theory of transport duals for probabilistic frames and prove certain properties of this class. We also investigate paths of probabilistic frames, identifying conditions under which geodesic paths between two such measures are themselves probabilistic frames. In the discrete case this is related to ranks of convex combinations of matrices, while in the continuous case this is related to the continuity of the optimal transport plan.
\end{abstract}

\section{Introduction}
\subsection{Probabilistic frames in the Wasserstein metric}
Frames are redundant spanning sets of vectors or functions that can be used to represent signals in a faithful but nonunique way and that provide an intuitive framework for describing and solving problems in coding theory and sparse representation. We refer to \cite{Christensen2003, CasKut2013, OkoudjouFiniteFrame} for more details on frames and their applications. To set the notations for this paper, we recall that a set of column vectors $\Phi=\{\varphi_i\}_{i=1}^N\subset\Rd$ is a frame if and only if there exist  $0<A\leq B <\infty$ such that $$ \forall x\in\R^d,\quad A\norm{x}^2\leq\sum\limits_{i=1}^N\ip{x}{\varphi_i}^2\leq B\norm{x}^2.$$  Throughout this paper we abuse notation by also using $\Phi$ to denote $[\varphi_1\dots\varphi_N]^{\top},$  the analysis operator of the frame. The (optimal) bounds in the above inequality are the smallest and largest  eigenvalues of the frame operator $S_{\Phi}=\Phi^\top\Phi.$

Continuous frames are natural generalization of frames and were introduced by Ali, Antoine, and Gazeau \cite{AAG} (see also,\ \cite{FornasierRauhut}).   Specifically,  let $X$ be a metrizable, locally compact space and $\nu$ be positive, inner regular Borel measure for $X$ supported on all of $X$. Let $H$ be a Hilbert space.  Then a set of vectors $\left\{\eta_x^i, i\in\ixset{n},x\in X\right\}\subset H$ is a rank-$n$ (continuous) frame if, for each $x\in X$, the vectors $\left\{\eta_x^i, i\in\ixset{n}\right\}$ are linearly independent, and if there exist  $0<A\leq B < \infty$ such that $\forall f\in H$, $$A\norm{f}^2\leq \sum\limits_{i=1}^n\int_X |\ip{\eta^i_x}{f}|^2d\nu(x)\leq B\norm{f}^2.$$

In this paper, we are concerned with a different generalization of frames called probabilistic frames. Developed in a series of papers \cite{MartinRTF,KassoMartinOverview, MartinKassoPframe}, probabilistic frames are an intuitive way to generalize finite frames to the space of probability measures with finite second moment. The probabilistic setting is particularly compelling, given recent interest in probabilistic approaches to optimal coding, such as \cite{HanLengHuang,PowellWhitehouse}. In the new setting, the defining characteristics of a frame amount to a restriction on the mean and covariance matrix of the probability measure. Because of this characterization, a natural space to explore probabilistic frames is the Wasserstein space of probability measures with finite second moment, a metric space with distance defined by an optimal transport problem.

Before we give the definitions and the concepts needed to state our results we first observe that in the simplest example, each finite frame can be associated with a probabilistic frame.  Indeed, let  $\Phi=\{\varphi_i\}_{i=1}^N$ be a frame and  let $\{\alpha_i\}_{i=1}^N \subset (0, 1) $ be such that  $\sum_{i=1}^N \alpha_i =1$.  Then the canonical $\alpha$-weighted probabilistic frame associated with  $\Phi$ is the probability measure $\mu_{\Phi,\alpha}$ given by $$d\mu_{\Phi,\alpha}(x)=\sum_{i=1}^N \alpha_i \delta_{\varphi_i}(x).$$ More generally, a probabilistic frame $\mu$ for $\Rd$ is a probability measure on $\Rd$ for which there exist $0<A\leq B<\infty$ such that for all $x\in\Rd$, $$A\norm{x}^2\leq\int_{\Rd}\ip{x}{y}^2d\mu(y)\leq B\norm{x}^2.$$ Tight (finite, continuous, or probabilistic) frames are those for which the frame bounds are equal. While the work of this paper is limited to probabilistic frames on $\Rd,$ of interest is also the possible extension of these ideas to probabilistic frames on infinite dimensional spaces, as outlined in \cite{JorgensenSong2016}.

Probabilistic frames form a subclass of the continuous frames defined above. Indeed, defining the support of a probability measure $\mu$ on $\Rd$ as the set:
$$\supp(\mu):=\left\{x\in \Rd \textrm{ s.t. for all open sets } U_x \textrm{ containing } x, \mu(U_x)>0\right\},$$ it is not difficult to prove that the support of any probabilistic frame is canonically associated with a rank-one continuous frame. And conversely, certain continuous frames can be rewritten as probabilistic frames.  However, despite this equivalence, there is a strong difference in the tools available in the different settings.

We shall investigate probabilistic frames in the setting of the Wassertein metric defined on $P_2(\Rd),$  the set of probability measures $\mu$ on $\Rd$ with finite second moment:
$$M_2^2(\mu):=\int_{\Rd}\norm{x}^2d\mu (x) < \infty.$$

By \cite[Theorem 5]{KassoMartinOverview}, $\mu$ is a probabilistic frame if and only if it has finite second moment and the linear span of its support is $\Rd$.  This characterization can be restated in terms of the probabilistic frame operator for $\mu$, $S_{\mu},$ which for all $y\in\Rd$ satisfies:
$$S_{\mu}y=\int_{\Rd}\ip{x}{y}\,x\,d\mu(x).$$

Equating $S_{\mu}$ with its matrix representation $\int_{\Rd}xx^{\top}d\mu(x)$, the requirement that the support of $\mu$ span $\Rd$ is equivalent to this matrix being positive definite. 

The ($2$-)Wasserstein distance, $W_2$ between two probability measures $\mu$ and $\nu$ in $P_2(\Rd)$ is: $$W_2^2(\mu,\nu):=\inf_{\gamma}\left\{\iint_{\RdxRd}\norm{x-y}^2d\gamma(x,y): \gamma \in \Gamma(\mu,\nu)\right\},$$ 
where $\Gamma(\mu,\nu)$ is the set of all joint probability measures $\gamma$ on $\RdxRd$ such that for all $A,B \subset \mathcal{B}(\Rd)$, $\gamma(A\times\Rd)=\mu(A)$ and $\gamma(\Rd\times B)=\nu(B)$. The Monge-Kantorovich optimal transport problem is the search for the set of joint measures which induce the infimum; any such joint distribution is called an optimal transport plan. A special subclass of transport plans are those given by deterministic transport maps (or deterministic couplings), where $\nu$ can be written as the pushforward of $\mu$ by a map $T,$ denoted $\nu=T_{\#}\mu.$ That is, for all $\nu$-integrable functions $\phi$, $$\int_{\Rd}\phi(y)d\nu(y)=\int_{\Rd}\phi(T(x))d\mu(x).$$
\noindent When $\mu$ is absolutely continuous with respect to Lebesgue measure \cite[p.~150]{AGS2005}, then $$W_2^2(\mu,\nu):=\inf_{T}\left\{\int_{\Rd}\norm{x-T(x)}^2d\mu(x): T_{\#}\mu=\nu\right\}.$$ 
\noindent Equipped with the 2-Wasserstein distance, $P_2(\Rd,W_2)$ is a complete, separable metric space.  Convergence in $P_2(\Rd)$ is the usual weak convergence of probability measures, combined with convergence of the second moments.

A few structural statements can be made about probabilistic frames as a subset of $P_2(\Rd).$ For brevity, the probabilistic frames for $\Rd$ are denoted by $\PF,$ and $\PFAB$ denotes the set of probabilistic frames in $\PF$ with optimal upper frame bound less than or equal to $B$ and optimal lower frame bound greater than or equal to $A,$ with $0<A,B<\infty.$  Then $\PFAB$ is a nonempty, convex, closed subset of $P_2(\Rd)$.  The nonemptiness  and convexity are trivial to show.
With respect to closedness, let $\{\mu_n\}$ be a sequence in $\PFAB$ converging to $\mu\in P_2(\Rd)$. 

Let $$y_0=\textrm{argmin}_{y\in S^{d-1}}\int_{\Rd}\ip{x}{y}^2d\mu(x).$$ Because $$\ip{x}{y_0}^2\leq\norm{x}^2\norm{y_0}^2\leq\norm{y_0}^2(1+\norm{x}^2),$$ it follows by definition of weak convergence in $P_2(\Rd)$ that $$\int_{\Rd}\ip{x}{y_0}^2d\mu_n(x)\rightarrow\int_{\Rd}\ip{x}{y_0}^2d\mu(x).$$  Since for all $n$, the values of $\int_{\Rd}\ip{x}{y_0}^2d\mu_n(x)$ are bounded above and below by $B$ and $A$, respectively, $\mu$ is an element of $\PFAB$.  Taking $A=B$, this also shows the closedness of $PF(A, \Rd)=PF(A,A,\Rd),$ the set of tight probabilistic frames with frame bound $A.$ However, the set of probabilistic frames itself is not closed, since one can construct a sequence of probabilistic frames whose lower frame bounds converge to zero: for example, a sequence of zero-mean, Gaussian measures with covariances $\frac{1}{n}I,\space n\in \N$.

\subsection{Our contributions}
The goal of this paper is to investigate two main topics on probabilistic frames in the setting of the Wasserstein space. The first topic is the notion of duality. For a finite frame, $\Phi=\{\varphi_i\}_{i=1}^N \subset \Rd,$ a set $\Psi=\{\psi_i\}_{i=1}^N \subset \Rd$ is said to be a dual frame to it if for every $x\in\Rd,$ $$x=\sum\limits_{i=1}^N\ip{x}{\varphi_i}\psi_i.$$

It is known that the redundancy of frames implies among other things the existence of many dual frames. While much attention has been paid to the so-called canonical dual frame, certain recent investigations have focused on alternate duals. For example, Sobolev duals were considered in \cite{BluLamPowYil2010, GunLamPowSaa2013} in relation to $\Sigma-\Delta$ quantization. Another example is the construction of dual frames for reconstruction of signals in the presence of erasures \cite{HanSun14}. In this paper, we introduce two other type of dualities, one dictated by the optimal transport problem, and the other grounded in the probabilistic setting we are working in. These two approaches will be developed in Section~\ref{sec2}.

The second goal of the paper is to investigate paths of probabilistic frames. Indeed, looking at the geodesic between any two probabilistic frames, it is natural to ask if the all probability measures along this path are probabilistic frames. This will be developed in Section~\ref{sec3}.

\section{Duality, Analysis, and Synthesis in the Set of Probabilistic Frames}\label{sec2}

\subsection{Transport Duals}
Duality, analysis, and synthesis are well-studied objects in finite frame theory. Sobolev duals have been proposed for use in reducing error in $\Sigma\Delta$ quantization \cite{BluLamPowYil2010}, and the authors of \cite{HanLengHuang} have found optimal dual frames for random erasures. Through the lens of optimal transport, extra nuances can be found in the probabilistic setting.

Given a frame $\Phi=\{\varphi_i\}_{i=1}^N$ as above, any possible dual frame to $\Phi$ can be written as: 
\begin{equation}\label{christensen_duals}
\{\psi_i\}_{i=1}^N = \{S_{\Phi}^{-1}\varphi_i + \beta_i - \sum\limits_{k=1}^N\ip{S_{\Phi}^{-1}\varphi_i}{\varphi_k}\beta_k\}_{i=1}^N
\end{equation}
where $\{\beta_i\}_{i=1}^N\subset\Rd$ and $S_{\Phi}$ is the frame operator for $\Phi$ \cite[Theorem 5.6.5]{Christensen2003}.  When $\beta_i=0$ for all $i,$ we have the canonical dual to $\Phi,$ which consists of the columns of the Moore-Penrose pseudoinverse of its analysis operator.  Inspired by the definition of duality above and this enumeration of the set of all possible duals to finite frames, we introduce a new notion of duality in the probabilistic context in this section.

\begin{definition}
Let $\mu$ be  a probabilistic frame on  $\Rd$. We say that a probability measure $\nu \in P_2(\Rd)$ is a \emph{transport dual} to $\mu$ if there exists $\gamma \in\Gamma(\mu,\nu)$ such that $$\iint_{\Rd \times \Rd}xy^{\top}d\gamma(x,y)=I.$$ 

We denote  the set of transport duals to $\mu$ by 
$$D_{\mu}:=\left\{\nu\in P_2(\Rd)\quad \Big|\quad \exists\space\gamma\in\Gamma(\mu,\nu)\textrm{ with } \iint_{\Rd \times \Rd}xy^{\top}d\gamma(x,y)=I\right\}.$$ 

\end{definition}
We let $\Gamma D_{\mu}\subset\Gamma(\mu,\nu)$ be  the set of joint distributions on $\RdxRd$ with first marginal $\mu$ ($\pi^1_{\hash}\gamma=\mu$) for which $\iint_{\Rd \times \Rd}xy^{\top}d\gamma(x,y)=I.$ This is the set of couplings (joint distributions) which induce the duality.

We recall that the canonical dual to a probabilistic frame $\mu$ defined in \cite{MartinRTF, KassoMartinOverview, MartinKassoPframe}, was given by $\tilde{\mu}:=(S_{\mu}^{-1})_{\#}\mu,$ yielding the reconstruction formula $ x=\int_{\Rd} \ip{x}{y}S_{\mu}y d\tilde{\mu}(y).$  It is easily seen that the canonical dual $\tilde{\mu}$ is an example of transport dual to $\mu$. Indeed, it is clear that $\gamma=(\iota\times S_{\mu}^{-1})_{\varhash}\mu \in \Gamma(\mu, \tilde{\mu}),$ where $\iota$ signifies the identity, and 
$$\iint_{\Rd \times \Rd}xy^{\top}d\gamma(x,y) = \int_{\Rd}x(S_{\mu}^{-1}x)^{\top}d\mu(x)=S_{\mu}S_{\mu}^{-1}=I.$$  Therefore, for a given probabilistic frame $\mu$, $\tilde{\mu} \in D_{\mu}.$

In fact, for a given probabilistic frame $\mu,$ there are other transport duals corresponding to similar deterministic couplings.  Generalizing the set of duals for discrete frames outlined in~\eqref{christensen_duals} leads to the following construction:

\begin{theorem}
Let $\mu$ be a probabilistic frame for $\Rd,$ and let $h:\Rd\rightarrow\Rd$ be any function in \\$\LL{\Rd}{\mu}:=\{f:\Rd\rightarrow\Rd | \int\norm{f(x)}_2^2d\mu(x) < \infty\}.$ Then $\psi_{h\hash}\mu\in D_{\mu},$ where $\psi_h:\Rd\rightarrow\Rd$ is defined by \\$\psi_h(x)=S_{\mu}^{-1}x+h(x)-\int_{\Rd}\ip{S_{\mu}^{-1}x}{y}h(y)d\mu(y).$
\end{theorem}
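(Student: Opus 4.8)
The plan is to exhibit an explicit coupling realizing the duality, namely the deterministic one induced by the map $\psi_h$ itself. Set $\nu:=(\psi_h)_{\#}\mu$ and $\gamma:=(\iota\times\psi_h)_{\#}\mu$, the pushforward of $\mu$ under $x\mapsto(x,\psi_h(x))$. By construction the first marginal of $\gamma$ is $\mu$ and the second is $\nu$, so $\gamma\in\Gamma(\mu,\nu)$ once we know $\nu\in P_2(\Rd)$; and for such a deterministic coupling the defining integral collapses to $\iint xy^{\top}\,d\gamma(x,y)=\int_{\Rd}x\,\psi_h(x)^{\top}\,d\mu(x)$. Everything then reduces to showing this last matrix equals $I$.

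Before computing, I would first record that $\psi_h\in\LL{\Rd}{\mu}$, which guarantees $\nu=(\psi_h)_{\#}\mu$ has finite second moment. The key observation is that the correction term is linear in $x$: writing $\ip{S_{\mu}^{-1}x}{y}=y^{\top}S_{\mu}^{-1}x$ and pulling the $x$-dependence outside the integral gives $\int_{\Rd}\ip{S_{\mu}^{-1}x}{y}h(y)\,d\mu(y)=MS_{\mu}^{-1}x$, where $M:=\int_{\Rd}h(y)y^{\top}\,d\mu(y)$. This matrix $M$ has finite entries by Cauchy--Schwarz, since $h\in\LL{\Rd}{\mu}$ and $\mu\in P_2(\Rd)$. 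Hence $\psi_h(x)=(I-M)S_{\mu}^{-1}x+h(x)$ is the sum of a linear part (in $\LL{\Rd}{\mu}$, bounded using $M_2^2(\mu)<\infty$ and boundedness of the fixed matrix $(I-M)S_{\mu}^{-1}$) and $h$ itself, so $\psi_h\in\LL{\Rd}{\mu}$.

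The main computation is then a direct expansion of $\int_{\Rd}x\,\psi_h(x)^{\top}\,d\mu(x)$. Transposing $\psi_h(x)=(I-M)S_{\mu}^{-1}x+h(x)$ and using that $S_{\mu}=\int_{\Rd}xx^{\top}\,d\mu(x)$ is symmetric (so $S_{\mu}^{-1}$ is too), the integral splits as
\begin{equation*}
\left(\int_{\Rd}xx^{\top}\,d\mu(x)\right)S_{\mu}^{-1}(I-M^{\top})+\int_{\Rd}x\,h(x)^{\top}\,d\mu(x)=S_{\mu}S_{\mu}^{-1}(I-M^{\top})+M^{\top}.
\end{equation*}
Since $\int_{\Rd}x\,h(x)^{\top}\,d\mu(x)=\big(\int_{\Rd}h(x)x^{\top}\,d\mu(x)\big)^{\top}=M^{\top}$ and $S_{\mu}S_{\mu}^{-1}=I$, the right-hand side is $(I-M^{\top})+M^{\top}=I$, as desired.

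I do not anticipate a serious obstacle here: the argument is a measure-theoretic transcription of the discrete enumeration~\eqref{christensen_duals}, with the finite sum over $k$ replaced by integration against $\mu$. The only points requiring genuine care are the rewriting of the correction term as the linear map $MS_{\mu}^{-1}x$ (so that integrability is transparent) and the bookkeeping of transposes together with the symmetry of $S_{\mu}^{-1}$; the cancellation of the two $M^{\top}$ terms is exactly the mechanism that, in the discrete case, makes every choice of $\{\beta_i\}$ yield a dual.
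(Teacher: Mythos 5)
Your proposal is correct and takes essentially the same route as the paper: both use the deterministic coupling $\gamma=(\iota,\psi_h)_{\hash}\mu$ and reduce the claim to the identity $\int_{\Rd}x\,\psi_h(x)^{\top}d\mu(x)=I$, which follows from the same cancellation of the $\int xh(x)^{\top}d\mu(x)$ term against the correction term. Your extra step of rewriting the correction as $MS_{\mu}^{-1}x$ and checking $\psi_h\in\LL{\Rd}{\mu}$ (so that $(\psi_h)_{\hash}\mu\in P_2(\Rd)$, as the definition of $D_{\mu}$ requires) is a worthwhile detail that the paper's proof leaves implicit.
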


\begin{proof}
Consider $\mu,\psi_{h\hash}\mu$ as above.  Define $\gamma:=(\iota,\psi_{h})_{\hash}\mu\in\Gamma(\mu,\psi_{h\hash}\mu).$  

Then 
\begin{align*}
\iint_{\Rd \times \Rd}xy^{\top}d\gamma(x,y)	&=\int_{\Rd}x\left[S_{\mu}^{-1}x+h(x)-\int_{\Rd}\ip{S_{\mu}^{-1}x}{z}h(z)d\mu(z)\right]^{\top}d\mu(x)&\\
			&= I+\int_{\Rd} xh(x)^{\top}d\mu(x) -\iint_{\Rd\times \Rd}x(S_{\mu}^{-1}x)^{\top}zh(z)^{\top}d\mu(x)d\mu(z)&=I
\end{align*}
\end{proof}

The restriction of the set of transport duals $D_{\mu}$ to lie inside $P_2(\Rd)$ is necessary, unlike in the finite frame case.  One might consider the following simple example. Let $\{e_i\}_{i=1}^d\subset\Rd$ denote the standard orthonormal basis.  Let $\{\varphi_i\}_{i=1}^{d+1}$ be given by $\varphi_i=\sqrt{i2^i}e_i,$ $i\in\ixset{d},$ and let $\varphi_{d+1}=0.$  Take the weights $\alpha_i=\frac{1}{2^i}, i\in\N$. 
Define $$\mu_1 = 2^{-d} \delta_0 + \sum\limits_{i=1}^d\alpha_i\delta_{\varphi_i}.$$  Let $\{\psi_i\}_{i=1}^{\infty}$ be given by $\psi_i=\sqrt{\frac{2^i}{i}}e_{1+[(i-1)\mod d]},\quad i\in\N.$  Let $\mu_2 = \sum\limits_{i=1}^\infty\alpha_i\delta_{\psi_i}.$
Then $\mu_1\in P_2(\Rd),$ but $$M_2^2(\mu_2)=\sum\limits_{i=1}^{\infty}\frac{1}{2^i}\norm{\psi_i}^2=\sum\limits_{i=1}^{\infty}\frac{1}{2^i}\frac{2^i}{i}=\infty.$$  Hence, $\mu_2\not\in P_2(\Rd).$  However, letting $\gamma\in P(\Rd \times \Rd)$ be given by $$\gamma = \sum\limits_{i=1}^d\alpha_i\delta_{(\varphi_i,\psi_i)}+\sum\limits_{i=d+1}^{\infty}\alpha_i\delta_{(0,\psi_i)},$$ it is clear that $\gamma\in\Gamma(\mu_1,\mu_2),$ and $$\iint_{\Rd \times \Rd}xy^{\top}d\gamma(x,y)=\sum\limits_{i=1}^d \frac{1}{2^i}\sqrt{i2^i}\sqrt{\frac{2^i}{i}}e_ie_i^{\top}=I.$$
This example shows that the Bessel-like restriction in the definition of transport duals, requiring them to lie in $P_2(\Rd),$ is necessary.  Given this restriction, we can assert the following theorem:

\begin{theorem}\label{duals_are_frames}
Let $\mu$ be a probabilistic frame.  Then:
\begin{enumerate}[label=\roman*]
\item[(i)] Each $\nu\in D_{\mu}$ is also a probabilistic frame.
\item[(ii)] $D_{\mu}$ is a compact subset of $P_2(\Rd)$ with respect to the weak topology. In particular, $D_{\mu}$ is a closed subset of $\PF$ with respect to the weak topology on $P_2(\Rd).$
\end{enumerate}
\end{theorem}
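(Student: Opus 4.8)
The plan is to show that any $\nu \in D_\mu$ has finite second moment (which holds by definition, since $D_\mu \subset P_2(\Rd)$) and that $\Span(\supp\nu) = \Rd$, so that the characterization from \cite[Theorem 5]{KassoMartinOverview} applies. The finite-second-moment condition is free; the content is the spanning condition. First I would fix $\nu \in D_\mu$ and a coupling $\gamma \in \Gamma(\mu,\nu)$ with $\iint xy^\top \, d\gamma(x,y) = I$. Suppose, for contradiction, that $\Span(\supp\nu) \neq \Rd$, so there exists a unit vector $z \perp \Span(\supp\nu)$, i.e. $\ip{y}{z} = 0$ for $\nu$-a.e. $y$. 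The key observation is that for any fixed $x$, $\ip{z}{(\iint x'y^\top d\gamma)z} = \iint \ip{x'}{z}\ip{y}{z}\, d\gamma(x',y)$, and more usefully I would test the identity against $z$ on both sides: $z^\top I z = \iint \ip{x}{z}\ip{y}{z}\, d\gamma(x,y)$. Since $\ip{y}{z} = 0$ for $\nu$-a.e.\ $y$, and the second marginal of $\gamma$ is $\nu$, the integrand vanishes $\gamma$-a.e., forcing $1 = z^\top I z = 0$, a contradiction. Hence $\Span(\supp\nu) = \Rd$ and $\nu$ is a probabilistic frame.

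**Proving (ii): $D_\mu$ is compact.**

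The plan is to establish that $D_\mu$ is tight (hence relatively compact by Prokhorov) and closed in $P_2(\Rd)$; compactness then follows. For tightness and for the uniform control of second moments needed for sequential compactness in $P_2(\Rd)$ (not merely weak compactness of measures), I would derive a uniform second-moment bound on $D_\mu$. Writing $S_\nu = \int yy^\top d\nu(y)$, the coupling identity gives $I = \iint xy^\top d\gamma$; applying Cauchy--Schwarz entrywise or via the operator inequality $\iint xy^\top d\gamma (\iint xy^\top d\gamma)^\top \preceq (\int xx^\top d\mu)^{1/2}$-type arguments, one bounds $\Tr(I) = d$ below by a quantity controlled by $M_2(\mu) M_2(\nu)$, yielding a lower bound on $M_2(\nu)$; the essential direction, though, is an upper bound. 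Here the canonical-dual structure should help: since $I = S_\mu^{1/2} S_\mu^{-1/2}$, a matrix Cauchy--Schwarz inequality $d = \Tr\!\big(\iint xy^\top d\gamma\big) \le M_2(\mu)\,M_2(\nu)$ only bounds $M_2(\nu)$ from below, so I expect the genuine upper bound to require the frame operator of $\mu$. Concretely, I would show $\nu$ is a Bessel-type family with upper bound controlled by $\mu$; more precisely, the identity $\iint xy^\top d\gamma = I$ combined with the lower frame bound $A$ of $\mu$ should force $S_\nu \preceq A^{-1} I$, bounding $M_2^2(\nu) = \Tr(S_\nu) \le d/A$ uniformly over $\nu \in D_\mu$.

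This uniform second-moment bound is the step I expect to be the main obstacle, and it is where I would concentrate effort. Granting it, I would argue closedness as follows: take $\nu_n \in D_\mu$ with $\nu_n \to \nu$ in $P_2(\Rd)$ and corresponding couplings $\gamma_n$. The marginals of $\gamma_n$ are $\mu$ (fixed) and $\nu_n$ (convergent with bounded second moments), so $\{\gamma_n\}$ is itself tight on $\RdxRd$; passing to a subsequence, $\gamma_n \to \gamma$ weakly with $\gamma \in \Gamma(\mu,\nu)$. The map $(x,y) \mapsto xy^\top$ has entries dominated by $\frac12(\norm{x}^2 + \norm{y}^2)$, and the uniform second-moment bound provides the uniform integrability needed to pass the limit $\iint xy^\top d\gamma_n = I$ through to $\iint xy^\top d\gamma = I$, so $\nu \in D_\mu$. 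Combining tightness plus the uniform second-moment bound (giving relative compactness in $P_2(\Rd)$ via Prokhorov and no mass escaping to infinity in second moment) with closedness yields that $D_\mu$ is compact; the final sentence, that $D_\mu$ is a closed subset of $\PF$, is then immediate from part (i) together with closedness in $P_2(\Rd)$.
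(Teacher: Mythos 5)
Your argument for (i) is correct and is essentially the paper's own proof: test $\iint xy^{\top}d\gamma(x,y)=I$ against a unit vector $z$ orthogonal to $\Span(\supp(\nu))$; since the second marginal of $\gamma$ is $\nu$, the integrand $\ip{z}{x}\ip{z}{y}$ vanishes $\gamma$-a.e., giving $1=\norm{z}^2=0$, a contradiction.

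For (ii), the step you singled out as the main obstacle is not merely an obstacle: it is false. There is no uniform second-moment bound on $D_{\mu}$; in particular $S_{\nu}\preceq A^{-1}I$ (equivalently $M_2^2(\nu)\leq d/A$ for all $\nu\in D_{\mu}$) fails. Take $d=1$ and $\mu=\frac12(\delta_{-1}+\delta_{1})$, a tight probabilistic frame with $A=1$. For $n\geq 1$ let $\nu_n=\frac12(\delta_{-n}+\delta_{n})$, and let $\gamma_n$ put mass $\frac14(1+\frac1n)$ at each of $(1,n)$ and $(-1,-n)$ and mass $\frac14(1-\frac1n)$ at each of $(1,-n)$ and $(-1,n)$. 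Then $\gamma_n\in\Gamma(\mu,\nu_n)$ and $\iint xy\,d\gamma_n(x,y)=2n\left[\tfrac14(1+\tfrac1n)-\tfrac14(1-\tfrac1n)\right]=1$, so $\nu_n\in D_{\mu}$, yet $M_2^2(\nu_n)=n^2\to\infty$. You can also see this from the paper's own construction of duals $(\psi_h)_{\#}\mu$: for zero-mean $\mu$ and $h\equiv v$ constant, $\psi_h(x)=S_{\mu}^{-1}x+v$, so every translate of the canonical dual is a transport dual; the finite-frame analogue is that the duals of a fixed frame form an unbounded affine family. Since your tightness claim, your Prokhorov step, and your uniform-integrability step all rest on this bound, the compactness argument collapses.

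Moreover, no repair is possible, because the compactness assertion itself is false: the family $\{\nu_n\}$ above is not tight, so it has no weakly convergent subsequence and $D_{\mu}$ is not weakly precompact. Even weak closedness fails: $\nu'_n=(1-\frac{1}{2n})\delta_0+\frac{1}{4n}(\delta_{-2n}+\delta_{2n})$ lies in $D_{\mu}$ (couple the mass at $\pm 2n$ with $\pm 1$ respectively and send the rest of $\mu$ to $0$, which gives $\iint xy\,d\gamma=1$), and $\nu'_n\to\delta_0$ weakly, but $\delta_0\notin D_{\mu}$. For what it is worth, the paper's own proof of (ii) breaks at exactly the spot you were trying to shore up: it invokes Prokhorov's theorem to assert that $\Gamma D_{\mu}$ is precompact, but that requires tightness of the family of second marginals --- that is, of $D_{\mu}$ itself, which is precisely what fails --- and its subsequent passage to the limit against test functions of quadratic growth requires a uniform integrability of second moments that is likewise unavailable. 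What does survive is closedness in the Wasserstein topology: if $\nu_n\to\nu$ in $W_2$ (rather than merely weakly with bounded moments), then $\norm{y}^2$ is uniformly integrable along $\{\nu_n\}$, the couplings $\gamma_n$ are tight, and the identity $\iint xy^{\top}d\gamma_n=I$ passes to a limit coupling $\gamma\in\Gamma(\mu,\nu)$, giving $\nu\in D_{\mu}$. That is exactly your closing argument, with $W_2$-convergence supplying the uniform integrability that the false moment bound cannot.
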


\begin{proof}

\begin{enumerate}[label=\roman*]
\item[(i)] Suppose $\nu\in D_{\mu}\subset P_2(\Rd).$  Since $D_{\mu}\subset P_2(\Rd)$ by definition, it is sufficient to show that $supp(\nu)$ spans $\Rd.$  

Let us assume, on the contrary, that there exists $z\in\Rd$, $z\neq 0$, such that $z\perp w$ for all $w\in\Span(\supp(\nu)).$ Pick $\gamma\in\Gamma(\mu,\nu)$ such that $\iint xy^{\top}d\gamma(x,y)=I.$   Because for all $x\in\supp(\nu),$ $z^{\top}x=0,$ 
$$ \norm{z}^2	 = 	\iint \ip{z}{x}\ip{z}{y}d\gamma(x,y) = 	\iint \ip{z}{x}\ip{z}{y}\indicator{\supp(\nu)\times\Rd}(x,y)d\gamma(x,y) =  0 $$ which is a contradiction.

\item[(ii)] Consider the lifting of the dual set, $\Gamma D_{\mu }:= \{\gamma\in\Gamma(\mu,\nu)\textrm{ s.t. }\iint xy^{\top}d\gamma(x,y)=I\}.$  It can be shown by Prokhorov's Theorem that $\Gamma D_{\mu}$ is precompact \cite[Chapter 4]{Villani2009}. That is, given $\{\gamma_n\}\subset \Gamma D_{\mu},$ there exists a subsequence $\{\gamma_{n_k}\}$ converging weakly to a limit $\gamma\in P_2(\Rd \times \Rd).$  With this in mind, if $\{\nu_n\}$ is a sequence in $D_{\mu},$ we can choose the corresponding $\{\gamma_n\}\in\Gamma D_{\mu},$ and let $\{\nu_{n_k}\}$ be the second marginals of a subsequence $\{\gamma_{n_k}\}.$ For all $\varphi\in C(\RdxRd)$ satisfying for some $C>0$ $|\varphi(x,y)|\leq C(1+\norm{x}^2+\norm{y}^2),$ $$\iint\varphi(x,y)d\gamma_{n_k}(x,y)\longrightarrow\iint\varphi(x,y)d\gamma(x,y).$$  

In particular, for all such $\varphi=\varphi(x),$ $$\iint \varphi(x)d\gamma_{n_k}(x,y)=\int\varphi(x)d\nu_{n_k}(x)\longrightarrow\iint\varphi(x)d\gamma(x,y)=\int\varphi(x)d(\pi^2_{\hash}\gamma)(x).$$

Thus $\nu_{n_k}$ converges weakly in $P_2(\Rd)$ to $\pi^2_{\hash}\gamma =: \nu,$ so that $\{\nu_n\}$ contains a weakly convergent subsequence.  Therefore $D_{\mu}$ is precompact.

Now let $\{\nu_n\}$ be any convergent sequence in $D_{\mu}$ which has a limit $\nu$ and which forms the second marginals of $\{\gamma_n\}\subset \Gamma D_{\mu}.$  Take again a convergent subsequence $\{\gamma_{n_k}\}$ with limit $\gamma$ necessarily in $\Gamma(\mu,\nu).$  Since $|x_i y_j|\leq\half(\norm{x}^2+\norm{y}^2),$ it follows that $$\iint x_i y_j d\gamma_{n_k}(x,y)\longrightarrow\iint x_i y_j d\gamma(x,y).$$  Then, since for each $n_k,$ $\iint x_i y_j d\gamma_{n_k}(x,y)\equiv \delta_{i,j},$ it follows that $\iint x_i y_j d\gamma(x,y)= \delta_{i,j},$ and therefore $\nu\in D_{\mu}.$  This shows that $D_{\mu}$ is also closed, and is therefore compact.  The closedness in $PF(\Rd)$ then follows naturally.

\end{enumerate}
\end{proof}

From the definition of transport duals, it is clear that their construction depends on the creation of a probability distribution on the product space which has a predetermined second-moments matrix and first and second marginals. This is, in general, a very difficult problem, which becomes a bit more tractable for probabilistic frames supported on finite, discrete sets by appealing to tools from linear algebra. 

Suppose we have two frames $\Phi=\{\varphi_i\}_{i=1}^N$ and $\Psi=\{\psi_j\}_{j=1}^M$, and two sets of positive weights, $\{\alpha_i\}_{i=1}^N$ and $\{\beta_j\}_{j=1}^M$, summing to unity.  Let $\mu_{\Phi,\alpha}:=\sum_{i=1}^N\alpha_i\delta_{\varphi_i}$, and let $\mu_{\Psi,\beta}:=\sum_{j=1}^M\beta_j\delta_{\psi_j}$.
In this case, any joint distribution $\gamma$ for $\mu_{\Phi,\alpha}$ and $\mu_{\Psi,\beta}$ satisfies 
$$d\gamma(x,y)=\sum\limits_{i=1}^N\sum\limits_{j=1}^M A_{i,j}\delta_{\varphi_i}(x)\delta_{\psi_j}(y)$$
where $A\in\R^{N\times M}$ with $$\sum_{i=1}^N A_{i,j}=\beta_j,\quad \sum_{j=1}^M A_{i,j}=\alpha_i,\quad A_{i,j}\geq 0\quad \forall i,j, \textrm{ and } \sum_{i=1}^N\sum_{j=1}^N A_{i,j}=1.$$ That is, there is a one-to-one correspondence between $\Gamma(\mu_{\Phi,\alpha},\mu_{\Psi,\beta})$ and this set of ``doubly stochastic'' matrices, which we denote by $DS(\alpha,\beta).$  Thus, to show that $\mu_{\Phi,\alpha}\in D_{\mu_{\Psi,\beta}}$, one must construct a matrix $A\in DS(\alpha,\beta)$ solving $\Phi^{\top}A\Psi=I.$

Regarding this question, we have the following result:

\begin{theorem}
Given frames $\{\varphi_i\}_{i=1}^N$ and $\{\psi_j\}_{j=1}^M$ for $\Rd$ with analysis operators $\Phi$ and $\Psi$, there exists $A\in DS(\alpha,\beta)$ with $\Phi^{\top}A\Psi = I$ if and only if there is no  triplet $(B, u, v)$ with  $B\in\R^{d\times d},$ $u\in\R^M,$ $v\in\R^N$ such that
\begin{empheq}[left=\empheqlbrace]{align*}
\quad\varphi_i^{\top}B\psi_j + u_i + v_j &\geq 0\\
\quad trace(B) + u^{\top}\alpha +v^{\top}\beta & < 0
\end{empheq}
\end{theorem}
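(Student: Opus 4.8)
The plan is to recognize the asserted equivalence as an instance of Farkas' lemma (a theorem of the alternative for finite linear systems) applied to the feasibility problem that defines the constrained doubly stochastic matrices.

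First I would rewrite the search for $A$ as a single linear feasibility system in the $NM$ unknowns $A_{ij}$. Viewing $A$ as a vector in $\R^{NM}$, the requirement $A\in DS(\alpha,\beta)$ together with $\Phi^{\top}A\Psi=I$ is precisely the system of linear equalities
\[
\Sum{j}{M}A_{ij}=\alpha_i\ (i\in\ixset{N}),\quad \Sum{i}{N}A_{ij}=\beta_j\ (j\in\ixset{M}),\quad \Sum{i}{N}\Sum{j}{M}A_{ij}\,\op{\varphi_i}{\psi_j}=I,
\]
subject to the sign constraints $A_{ij}\geq 0$. Here the identity $\Phi^{\top}A\Psi=\sum_{i,j}A_{ij}\,\op{\varphi_i}{\psi_j}$ converts the matrix equation into $d^2$ scalar equations that are linear in $A$. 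Collecting the right-hand sides $\alpha\in\RN$, $\beta\in\RM$ and the entries of $I$ into one target $b$, and the coefficients into one matrix $G$, the existence of an admissible $A$ is exactly the solvability of $\{Gx=b,\ x\geq 0\}$.

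Next I would apply Farkas' lemma in the form: exactly one of (a) there exists $x\geq 0$ with $Gx=b$, or (b) there exists a dual vector $y$ with $G^{\top}y\geq 0$ and $\ip{b}{y}<0$, holds. The dual vector $y$ decomposes, according to the three families of constraints, into a triple $(B,u,v)$, where $u$ is paired with the row-sum equations (indexed by $i\in\ixset{N}$ and tested against $\alpha$), $v$ is paired with the column-sum equations (indexed by $j\in\ixset{M}$ and tested against $\beta$), and $B\in\R^{d\times d}$ is paired with the moment equation under the Frobenius inner product. The core computation is to read off the two quantities in (b). For the reduced-cost condition $G^{\top}y\geq 0$, the coefficient attached to $A_{ij}$ is $u_i+v_j+\sum_{k,\ell}B_{k\ell}(\varphi_i)_k(\psi_j)_\ell$, and the identity $\varphi_i^{\top}B\psi_j=\ip{B}{\op{\varphi_i}{\psi_j}}=\sum_{k,\ell}B_{k\ell}(\varphi_i)_k(\psi_j)_\ell$ rewrites this as $\varphi_i^{\top}B\psi_j+u_i+v_j\geq 0$. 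For the objective $\ip{b}{y}<0$, the three blocks contribute $u^{\top}\alpha+v^{\top}\beta+\ip{B}{I}=u^{\top}\alpha+v^{\top}\beta+\Tr(B)<0$. Thus alternative (b) is exactly the existence of the forbidden triplet $(B,u,v)$, and Farkas' lemma gives that an admissible $A$ exists if and only if no such triplet exists.

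It is worth recording the elementary half of the dichotomy explicitly as a check: if an admissible $A$ and a triplet $(B,u,v)$ both existed, then weighting the reduced-cost inequalities by the nonnegative numbers $A_{ij}$ and summing would give
\[
0\leq \Sum{i}{N}\Sum{j}{M}A_{ij}\paren{\varphi_i^{\top}B\psi_j+u_i+v_j}=\Tr(B)+u^{\top}\alpha+v^{\top}\beta,
\]
where the right-hand side is evaluated using the three primal constraints, contradicting the strict inequality $\Tr(B)+u^{\top}\alpha+v^{\top}\beta<0$. The substantive half---that primal infeasibility forces a separating certificate---is the content of Farkas' lemma and ultimately rests on the closedness of the finitely generated cone $\{Gx:x\geq 0\}$ together with the separating hyperplane theorem. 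The step that most deserves care is the dualization of the matrix-valued moment constraint: it must be paired against a general (not necessarily symmetric) $B\in\R^{d\times d}$ under the trace pairing, with the verification $\ip{B}{I}=\Tr(B)$, and the index bookkeeping must keep $u$ over $\ixset{N}$ tested against $\alpha$ and $v$ over $\ixset{M}$ tested against $\beta$, which is what fixes the pairings $u^{\top}\alpha$ and $v^{\top}\beta$ in the final statement.
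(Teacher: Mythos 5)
Your proof is correct and follows essentially the same route as the paper: both encode the constraints $A\in DS(\alpha,\beta)$, $\Phi^{\top}A\Psi=I$ as a single linear feasibility system in the entries of $A$ and invoke Farkas' lemma, splitting the dual vector into a triple $(B,u,v)$, with the paper phrasing the vectorization via Kronecker products where you use the trace (Frobenius) pairing. In fact your index bookkeeping ($u\in\R^N$ tested against $\alpha$, $v\in\R^M$ tested against $\beta$) is the internally consistent version of the statement, since the paper's declared dimensions $u\in\R^M$, $v\in\R^N$ do not match its own expressions $u_i+v_j$ and $u^{\top}\alpha+v^{\top}\beta$ unless $M=N$, and the paper's own proof ends with the pairing $u^{\top}\beta+v^{\top}\alpha$ rather than the one in the theorem.
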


\begin{proof}
Recall that we must solve the system
\begin{align}
\Phi^{\top}A\Psi = I, \quad A_{i,j} \geq 0\ \quad \sum_{j=1}^M A_{i,j} = \alpha_i, \quad \sum_{i=1}^N A_{i,j} = \beta_j \label{A_problem_statement}
\end{align}

Defining, for a matrix $B$, $vec(B)$ to be the vector formed by stacking the columns of $B$, we may rewrite the problem in terms of the Kronecker product.  Using the following variables, $K=\Psi^{\top}\otimes\Phi^{\top},$ $a=vec(A),$ $z_N=[1 \dots 1]^{\top}\in\R^N$, $z_M=[1 \dots 1]^{\top}\in\R^M$, and $t=vec(I),I\in\R^{d\times d}$, we have:
\begin{empheq}[left=\empheqlbrace]{align*}
\quad Ka &= t \\
\quad (z_N^{\top} \otimes I_{(M\times M)})a &=\beta \\
\quad (I_{(N\times N)} \otimes z_M^{\top})a &=\alpha \\
\quad a_i & \geq  0 \quad \forall i\in\{1, \dots, MN\}
\end{empheq}

\noindent We can combine the equations above, letting $$K'=\left[\begin{array}{c} K \\ (z_N^{\top} \otimes I_{(M\times M)}) \\ (I_{(N\times N)} \otimes z_M^{\top})\end{array}\right] \textrm{ and } t'=\left[\begin{array}{c} t \\ \beta \\ \alpha \end{array}\right]$$  Then the problem simplifies to solving $K'a = t'$ such that $a_i \geq  0 \quad \forall i\in\{1, \dots, MN\}.$
By Farkas' Lemma \cite[Lemma 1]{FarkasRef}, either this system has a solution or there exists $y\in\R^{d^2+M+N}$ such that 
\begin{empheq}[left=\empheqlbrace]{align}
\quad y^{\top}K' &\geq 0 \label{farkas_statement1}\\
\quad y^{\top}t' &< 0 \label{farkas_statement2}
\end{empheq}
Now write any such $y$ as $y=\left[\begin{array}{c} b \\ u \\ v \end{array} \right],$ with $b\in\R^{d^2},$ $u\in\R^M,$ and $v\in\R^N$, and let $b=vec(B)$ with $B\in\R^{d\times d}.$  Then Equations \eqref{farkas_statement1} and \eqref{farkas_statement2} hold if and only if
\begin{subequations}
\begin{align*}
b^{\top}K + u^{\top}(z_N^{\top} \otimes I_{(M\times M)}) +v^{\top}(I_{(N\times N)} \otimes z_M^{\top}) &\geq 0\\
b^{\top}t + u^{\top}\beta + v^{\top}\alpha &< 0
\end{align*}
\end{subequations} 
That is,
\begin{subequations}
\begin{align*}
vec(\Phi B\Psi^{\top}) + vec(z_N u^{\top} I_{(M\times M)})^{\top} +vec(I_{(N\times N)} v z_M^{\top})^{\top} &\geq 0\\
b^{\top}t + u^{\top}\beta + v^{\top}\alpha &< 0
\end{align*}
\end{subequations} 
or, equivalently,
\begin{subequations}
\begin{align*}
\varphi_i^{\top} B\psi_j + u_i + v_j  &\geq 0 \quad {\forall i,j}\\
trace(B) + u^{\top}\beta + v^{\top}\alpha &< 0
\end{align*}
\end{subequations} 
\end{proof}

The simplicity of the following corollary is alluring because it connects the weighted averages of the frame vectors to the existence of the plan yielding the duality, but the condition is difficult to show because of its scope.  
\begin{corollary}
If there does not exist $B\in\R^{d\times d}$ such that
\begin{align*}\label{farkas_equiv}
\alpha^{\top}\Phi B\Psi^{\top}\beta - trace(B) &> 0
\end{align*}
then by Farkas' Lemma the system of \eqref{A_problem_statement} (and its equivalents) is not solvable and the desired matrix $A\in DS(\alpha,\beta)$ exists.
\end{corollary}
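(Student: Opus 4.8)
The plan is to deduce this sufficient condition directly from the Farkas characterization of the previous theorem, arguing by contrapositive. Recall that by that theorem, a matrix $A\in DS(\alpha,\beta)$ solving $\Phi^{\top}A\Psi=I$ exists precisely when the obstruction (Farkas alternative) system
$$\varphi_i^{\top}B\psi_j+u_j+v_i\geq 0\ \ (\forall i,j),\qquad \textrm{trace}(B)+u^{\top}\beta+v^{\top}\alpha<0$$
has no solution $(B,u,v)$. Here I keep the index bookkeeping straight: $u\in\R^M$ pairs with the column-sum constraints and hence with $\beta$, while $v\in\R^N$ pairs with the row-sum constraints and hence with $\alpha$, which is what makes the second inequality read $u^{\top}\beta+v^{\top}\alpha$. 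So it suffices to show that if this obstruction system \emph{is} solvable, then some $B$ satisfies $\alpha^{\top}\Phi B\Psi^{\top}\beta-\textrm{trace}(B)>0$; contrapositively, if no such $B$ exists, the obstruction system is unsolvable and $A$ exists.

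The key step is to test the family of first-block inequalities against the product weights. First I would take a solution $(B,u,v)$ of the obstruction system, multiply the $(i,j)$-th inequality $\varphi_i^{\top}B\psi_j+u_j+v_i\geq 0$ by the nonnegative weight $\alpha_i\beta_j$, and sum over all $i\in\{1,\dots,N\}$ and $j\in\{1,\dots,M\}$. Because every weight is nonnegative, the summed scalar inequality is still $\geq 0$. I would then simplify the three resulting terms using that the weights sum to unity, $\sum_i\alpha_i=\sum_j\beta_j=1$: the bilinear term collapses to $\sum_{i,j}\alpha_i\beta_j\varphi_i^{\top}B\psi_j=\big(\sum_i\alpha_i\varphi_i\big)^{\top}B\big(\sum_j\beta_j\psi_j\big)=\alpha^{\top}\Phi B\Psi^{\top}\beta$, while the linear terms reduce to $\sum_{i,j}\alpha_i\beta_j u_j=u^{\top}\beta$ and $\sum_{i,j}\alpha_i\beta_j v_i=v^{\top}\alpha$. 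This yields the single inequality $\alpha^{\top}\Phi B\Psi^{\top}\beta+u^{\top}\beta+v^{\top}\alpha\geq 0$.

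To finish, I would chain this with the strict inequality of the obstruction system. From the summed inequality, $u^{\top}\beta+v^{\top}\alpha\geq-\alpha^{\top}\Phi B\Psi^{\top}\beta$, while the obstruction gives $u^{\top}\beta+v^{\top}\alpha<-\textrm{trace}(B)$; putting these together eliminates $u$ and $v$ and leaves $-\alpha^{\top}\Phi B\Psi^{\top}\beta<-\textrm{trace}(B)$, i.e. $\alpha^{\top}\Phi B\Psi^{\top}\beta-\textrm{trace}(B)>0$. Thus the same $B$ from the triplet already witnesses the forbidden condition. Taking the contrapositive completes the proof: if no $B$ satisfies $\alpha^{\top}\Phi B\Psi^{\top}\beta-\textrm{trace}(B)>0$, then the obstruction system has no solution, and by Farkas' Lemma (exactly as applied in the preceding theorem) the system \eqref{A_problem_statement} is solvable, i.e. the desired $A\in DS(\alpha,\beta)$ exists.

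I do not anticipate a genuine obstacle here: the entire content is the observation that integrating the dual inequalities against the product distribution $\alpha_i\beta_j$ (itself a valid coupling of the marginals) collapses the $NM$ constraints into one scalar condition in $B$ alone. The only points requiring care are the index conventions pairing $u,v$ with $\beta,\alpha$, and the explicit use of $\sum_i\alpha_i=\sum_j\beta_j=1$, which is precisely what makes the $u$- and $v$-contributions telescope into $u^{\top}\beta$ and $v^{\top}\alpha$ so that they cancel against the strict inequality.
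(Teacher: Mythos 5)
Your proof is correct. The paper itself gives no proof of this corollary --- it is asserted as an immediate consequence of the preceding theorem ``by Farkas' Lemma'' --- and your argument supplies exactly the computation that assertion presupposes: assuming the Farkas alternative has a solution $(B,u,v)$, you pair the block of inequalities $\varphi_i^{\top}B\psi_j+u_j+v_i\geq 0$ with the product coupling $\alpha_i\beta_j$ (which lies in $DS(\alpha,\beta)$), sum, and use $\sum_i\alpha_i=\sum_j\beta_j=1$ to obtain $\alpha^{\top}\Phi B\Psi^{\top}\beta+u^{\top}\beta+v^{\top}\alpha\geq 0$; chaining with $\textrm{trace}(B)+u^{\top}\beta+v^{\top}\alpha<0$ eliminates $(u,v)$ and exhibits a $B$ with $\alpha^{\top}\Phi B\Psi^{\top}\beta-\textrm{trace}(B)>0$, so the contrapositive yields the corollary. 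Two remarks on how your write-up relates to the text. First, your index bookkeeping ($u\in\R^M$ paired with $\beta$ and the column-sum constraints, $v\in\R^N$ paired with $\alpha$ and the row-sum constraints) follows the theorem's \emph{proof} rather than its displayed statement, which swaps these pairings; yours is the dimensionally consistent choice, and the cancellation only requires that the same convention be used in both inequalities, as you do. Second, you have implicitly repaired the corollary's wording: under the hypothesis it is the alternative system \eqref{farkas_statement1}--\eqref{farkas_statement2} that becomes unsolvable, whence \eqref{A_problem_statement} \emph{is} solvable and $A\in DS(\alpha,\beta)$ exists; the printed claim that \eqref{A_problem_statement} ``is not solvable and the desired matrix $A$ exists'' is self-contradictory as stated, and your reading is the intended one.
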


\noindent A true converse has proven elusive. However, we can identify a few related conditions under which no transport duals whatsoever can be constructed.  In particular, in the case that the frames are uniformly weighted, we have the following zero-centroid condition.

\begin{theorem}\label{zero_centroid}
Again, take $z_N:=[1\dots 1]^{\top}\in\R^N.$ Suppose that  $\Psi=\{\psi_i\}_{i=1}^N\subset\Rd$ is a frame such that $\sum_{i=1}^N\psi_i=0$, then $\mu_{\Psi,\frac{1}{N}z_N}$ has no equal-weight transport dual supported on a set of of cardinality $d.$
\end{theorem}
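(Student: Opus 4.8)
The plan is to argue by contradiction through the doubly stochastic correspondence introduced just before the statement. Let $z_d$ denote the all-ones vector in $\Rd$. Suppose $\mu_{\Psi,\frac{1}{N}z_N}$ did admit an equal-weight transport dual $\nu=\mu_{\Phi,\frac{1}{d}z_d}$, where $\Phi=\{\varphi_k\}_{k=1}^d$ is a set of $d$ distinct points. Since $\nu\in D_{\mu_{\Psi,\frac{1}{N}z_N}}$, there is a coupling $\gamma\in\Gamma(\mu_{\Psi,\frac{1}{N}z_N},\nu)$ realizing the duality, and by the bijection with doubly stochastic matrices this coupling is encoded by a matrix $A\in\R^{N\times d}$ with $A_{ik}\geq 0$, first-marginal constraints $\sum_{k=1}^d A_{ik}=\frac{1}{N}$ for each $i$, second-marginal constraints $\sum_{i=1}^N A_{ik}=\frac{1}{d}$ for each $k$, and moment condition $\Psi^\top A\Phi=I$.

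The first step is to record that $\Phi$ must be invertible: the identity $\Psi^\top A\Phi=I$ exhibits $\Psi^\top A$ as a left inverse of the square matrix $\Phi\in\R^{d\times d}$, so $\Phi$ is nonsingular. (Equivalently, by part (i) of Theorem~\ref{duals_are_frames} the dual $\nu$ is a probabilistic frame, so its $d$ distinct support points span $\Rd$ and therefore form a basis.) Hence $\Psi^\top A=\Phi^{-1}$.

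The heart of the argument is to push the all-ones vector $z_d$ through this identity and invoke the zero-centroid hypothesis. The first-marginal constraints say precisely that $Az_d=\frac{1}{N}z_N$, while the hypothesis $\sum_{i=1}^N\psi_i=0$ says precisely that $\Psi^\top z_N=0$. Combining these,
\[
\Phi^{-1}z_d=\Psi^\top A\,z_d=\frac{1}{N}\Psi^\top z_N=0.
\]
Since $\Phi^{-1}$ is invertible and $z_d\neq 0$, this is the desired contradiction.

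I do not anticipate a genuine obstacle, as the computation is short once assembled; the only real care is in the bookkeeping of the two marginals. The uniform weights $\frac{1}{N}$ must be placed on the $\Psi$-side, i.e.\ on the first marginal, because that is the side carrying the zero-centroid information, and it is exactly the pairing of the uniform first marginal with $\sum_i\psi_i=0$ that annihilates $z_d$ under $\Phi^{-1}$. It is worth noting that the equal-weight hypothesis on the dual $\Phi$ (the second-marginal constraints) never enters the computation; the same argument forbids \emph{any} transport dual of $\mu_{\Psi,\frac{1}{N}z_N}$ supported on exactly $d$ points, so the stated theorem is really a special case of this slightly stronger assertion.
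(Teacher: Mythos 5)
Your proof is correct, and it takes a recognizably different (and cleaner) route than the paper's. The paper sets up the same doubly stochastic correspondence, but then decomposes each row of the coupling matrix (there taken as $d\times N$) as $u_i = s + \lambda^i$, where $s$ is the constant vector with entries $\frac{1}{Nd}$: the zero-centroid hypothesis gives $\ip{s}{v_j}=0$ for every column $v_j$ of $\Psi$, and the $\frac{1}{N}$-marginal constraint gives $\sum_{i=1}^d \lambda^i = 0$, so that $\Lambda=[\lambda^1\cdots\lambda^d]^{\top}$ has rank at most $d-1$; hence $A\Psi=\Lambda\Psi$ is singular and can never be inverted by $\Phi^{\top}$. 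Your argument establishes the same singularity by exhibiting an explicit kernel vector rather than a rank bound: transposed into the paper's orientation, your identity $\Psi^{\top}Az_d=\frac{1}{N}\Psi^{\top}z_N=0$ says precisely that the rows of $A\Psi$ sum to the zero vector, which is exactly the linear dependence underlying the paper's $\mathrm{rank}(\Lambda)\leq d-1$. So both proofs run on the same two ingredients (uniform weights on the $\Psi$-marginal plus the zero-centroid condition), but yours is more economical: it needs no decomposition of $A$, and it sidesteps the paper's shaky intermediate chain $\det(A\Psi)=\prod_i\ip{u_i}{v_i}=\prod_i\ip{\lambda^i}{v_i}=\det(\Lambda\Psi)$, which is not a valid identity for general matrices (a determinant is not the product of diagonal entries); the paper's conclusion survives only because $A\Psi=\Lambda\Psi$ holds as a matrix identity and $\Lambda$ is rank-deficient. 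Your closing observation is also accurate, and in fact applies equally to the paper's argument: neither proof ever uses the dual-side marginal constraints, so both actually rule out transport duals with \emph{arbitrary} weights supported on $d$ points, a slightly stronger statement than Theorem~\ref{zero_centroid} asserts.
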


\begin{proof}
Given $\Psi$ as above, let $\{v_j\}_{j=1}^d\subset\mathbf{R}^N$ denote the columns of the analysis operator $\Psi,$ and let $\{u_i\}_{i=1}^d\subset\mathbf{R}^N$ denote the rows of some $A\in DS(\frac{1}{d}z_d,\frac{1}{N}z_N).$ $\Psi$ will have a transport dual of cardinality $d$ if and only if for some $A,$ $A\Psi = [[\ip{u_i}{v_j}]]$ is invertible. (Here, $Q=[[q_{i,j}]]$ denotes the entrywise definition of $Q.$) Each $u_i=s +\lambda^i,$ where $s=[\frac{1}{Nd} \cdots \frac{1}{Nd}]\in\Rd,$ and
\begin{align}
\sum\limits_{k=1}^N \lambda_k^i & = 0 \textrm{ for each } i\in\{1,...,d\}\nonumber \\ 
\sum\limits_{i=1}^d\lambda_k^i  & = 0 \textrm{ for each } k\in\{1,...,N\} \nonumber
\end{align}
\noindent so that $\{\lambda^i\}_{i=1}^d$ has zero centroid as well and is therefore linearly dependent.  Let $\Lambda = [\lambda^1 \hdots \lambda^d]^{\top}.$  Then $$det(A\Phi)  = \prod_{i=1}^d \ip{u_i}{v_i}	= \prod_{i=1}^d \ip{s+\lambda^i}{v_i} = \prod_{i=1}^d \ip{\lambda^i}{v_i}= det(\Lambda\Psi) = 0$$			
\noindent because $v_i \perp s$ for all $i\in\{1,\dots,d\}$ and since $rank(\Lambda) \leq d-1.$
\end{proof}

As a consequence, Theorem \ref{zero_centroid} implies that no equiangular tight frame in $\mathbf{R}^2$ has a transport dual of cardinality 2.

\begin{remark}
One interesting aspect of the transport duals in the context of finite discrete probabilistic frames, i.e., finite frames, is the existence of pairs of dual frames with different cardinalities. For example, one can consider the probabilistic frame given by $d\mu=\frac{1}{2}\delta_{\varphi_1}+\frac{1}{6}\delta_{\varphi_2}+\frac{1}{3}\delta_{\varphi_3}$ with $\varphi_1=[1\quad 0]^{\top},$ $\varphi_2=[\frac{\sqrt{3}}{2} \quad \frac{1}{2}]^{\top},$ and $\varphi_3=[0 \quad 1]^{\top}.$ Then the probabilistic frame $\nu$ given by $d\nu=\frac{1}{2}\delta_{\psi_1}+\frac{1}{2}\delta_{\psi_2}$ with $\psi_1=[\frac{18}{4-\sqrt{3}}\quad \frac{6\cdot(2+\sqrt{3}}{4-\sqrt{3}}]^{\top}$ and $\psi_2=[\frac{-12}{4-\sqrt{3}}\quad \frac{24}{4-\sqrt{3}}]^{\top}$ is a transport dual for $\mu$ with support of different cardinality. The role of  transport duals in problems such as reconstruction in the presence of erasure will be the object of future investigations. 
\end{remark}

\subsection{Analysis and Synthesis in the Probabilistic Context}

In \cite{MartinRTF, KassoMartinOverview, MartinKassoPframe}, the analysis and synthesis operators for probabilistic frames were defined analogously to those of continuous frames.  Given a probabilistic frame $\mu$, the analysis operator was defined \cite[2.2]{KassoMartinOverview} as  $$A_{\mu}:\Rd\rightarrow\LL{\Rd}{\mu} \textrm{ given by  } x\mapsto\ip{x}{\cdot}.$$  Its synthesis operator was $$A_{\mu}^{*}:\LL{\Rd}{\mu}\rightarrow\Rd \textrm{ given by } f\mapsto\int_{\Rd}xf(x)d\mu(x).$$ 
The foregoing construction of transport duals, on the other hand, begs a more probability-theoretic definition of analysis and synthesis.  As defined above, the analysis operator $A_{\mu}$ is independent of the measure $\mu$.  Indeed, it is not clear from this definition how one could do ``analysis'' with one probabilistic frame followed by ``synthesis'' with another.  However, finite frame theory itself gives us a clue about how to think about analysis and synthesis in the probabilistic context.  
\begin{example}
Consider two frames for $\Rd$, $\{\varphi_i\}_{i=1}^N$ and $\{\psi_i\}_{i=1}^N$. Let $\{e_i\}_{i=1}^N\subset\R^N$ be an orthonormal basis for $\R^N.$ Then the analysis operator for $\Phi$, $A_{\Phi}:\Rd\rightarrow\R^N$ is given by $$A_{\Phi}(x)=\Phi x =\sum\limits_{i=1}^N\ip{x}{\varphi_i}e_i\quad \textrm{for } x\in\Rd.$$  The synthesis operator for $\Psi$, $A_{\Psi}^*:\R^N\rightarrow\Rd$, is given by $$A_{\Psi}^*(y)=\Psi^{\top}y=\sum\limits_{i=1}^N \ip{y}{e_i}\psi_i\quad\textrm{for }y\in\R^N.$$  Then we can compose the operators simply by writing $A_{\Psi}^*A_{\Phi}(x)=\sum\limits_{i=1}^N\ip{x}{\varphi_i}\psi_i.$
If, however, we choose some $\sigma$ and $\pi$ in $\Pi_N,$ the set of permutations on N-element sets, and instead choose to do analysis and synthesis with the two frames as $$A_{\Psi}^*A_{\Phi}(x)=\sum\limits_{i=1}^N\ip{x}{\varphi_{\sigma(i)}}\psi_{\pi(i)},$$ then it will be as if we had chosen two different finite frames to work with.  This is because the ordering of the frame vectors is implicitly tied to the ordering of the reference basis $\{e_i\}_{i=1}^N.$  
\end{example}

Order matters! From the example, it is clear that even given the fixed reference basis, we cannot truly speak of a single analysis operator for the set $\{\varphi_i\}_{i=1}^N,$ without imposing an order on it relating it to the fixed reference basis.  Similarly, for a probabilistic frame $\mu,$ there must be a reference \textit{measure} $\eta$ playing the role of the reference basis, and this will still lead to a family of analysis operators, each corresponding to a joint distribution $\gamma\in\Gamma(\mu,\eta).$  The orthogonality of the reference basis in the above example turns out not to be necessary; its function is to match up frame coefficients with the appropriate vectors.  What is key is that transport plans exist between the probabilistic frame and the reference measure and that the support of the reference measure is sufficient to ``glue'' together arbitrary probabilistic frames through analysis and synthesis.

To make this idea of coefficient-matching rigorous, some technicalities about conditional probabilities are necessary.  Conditional probabilities can be defined via the Rokhlin Disintegration Theorem \cite[Theorem 5.3.1]{AGS2005}.  If $\boldsymbol{\mu}\in P(\R^M\times\R^N)$ and $\nu=\mu^1=\pi^1_{\hash}\boldsymbol{\mu}$, then one can find a Borel family of probability measures $\{\mu_x\}_{x\in\R^M}\subset P(\R^N)$ which is $\mu^1$-a.e. uniquely determined such that $\boldsymbol{\mu}=\int_{\R^M}\mu_{x}d\mu^1(x)$.  In the language of conditional probability, for any $f\in C_b(\R^M\times\R^N)$, it is then meaningful to write $$\iint_{\R^M\times\R^N}f(x,y)d\boldsymbol{\mu}(x,y)=\int_{\R^N}\int_{\R^M}f(x,y)d\boldsymbol{\mu}(y|x)d\mu^1(x),$$ with the understanding that $\boldsymbol\mu(\cdot | x)$ is defined $\mu^1$-a.e.  Gluings can then be constructed, which allow us to use conditional probabilities with respect to a common reference measure to construct a joint distribution between previously unrelated measures. 
\begin{lemma}\emph{Gluing Lemma \cite[Lemma 5.3.2]{AGS2005}}\label{gluings}
Let $\gamma^{12}\in P(\R^K\times\R^M)$, $\gamma^{13}\in P(\R^K\times\R^N)$ such that $\pi^1_{\hash}\gamma^{12}=\pi^1_{\hash}\gamma^{13}=\mu^1$.  Then there exists $\boldsymbol{\mu}\in P(\R^K\times\R^M\times\R^N)$ such that $\pi^{1,2}_{\hash}\boldsymbol{\mu}=\gamma^{12}$ and $\pi^{1,3}_{\hash}\boldsymbol{\mu}=\gamma^{13}$.  Moreover, if $\gamma^{12}=\int_{\R^K} \gamma_{x_1}^{12}d\mu^1$, $\gamma^{13}=\int_{\R^K} \gamma^{13}_{x_1}d\mu^1$, and $\boldsymbol{\mu}=\int_{\R^K} \boldsymbol{\mu}_{x_1}d\mu^1$ are the disintegrations of $\gamma^{12}$, $\gamma^{13}$, and $\boldsymbol{\mu}$ with respect to $\mu^1$, then the first statement is equivalent to $\boldsymbol{\mu}_{x_1}\in \Gamma(\gamma_{x_1}^{12},\gamma_{x_1}^{13})\subset P(\R^M\times\R^N)$ for $\mu^1$-a.e. $x_1\in\R^K$. 
\end{lemma}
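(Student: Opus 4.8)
The plan is to construct $\boldsymbol{\mu}$ fiber-by-fiber over the common first coordinate using the Disintegration Theorem, gluing the fibers by the independent (product) coupling, and then to read off both the marginal identities and the stated equivalence from the $\mu^1$-a.e. uniqueness clause of that theorem. First I would apply the Rokhlin Disintegration Theorem (as recalled just before the lemma) to each of the two given measures with respect to their common first marginal $\mu^1$. This produces $\mu^1$-a.e. uniquely determined Borel families $\{\gamma^{12}_{x_1}\}_{x_1\in\R^K}\subset P(\R^M)$ and $\{\gamma^{13}_{x_1}\}_{x_1\in\R^K}\subset P(\R^N)$ with $\gamma^{12}=\int_{\R^K}\gamma^{12}_{x_1}d\mu^1(x_1)$ and $\gamma^{13}=\int_{\R^K}\gamma^{13}_{x_1}d\mu^1(x_1)$. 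For $\mu^1$-a.e. $x_1$ I would then set $\boldsymbol{\mu}_{x_1}:=\gamma^{12}_{x_1}\otimes\gamma^{13}_{x_1}\in P(\R^M\times\R^N)$, the product measure on the fiber, and define $\boldsymbol{\mu}:=\int_{\R^K}\boldsymbol{\mu}_{x_1}d\mu^1(x_1)$, understood as a measure on $\R^K\times\R^M\times\R^N$ supported fiberwise over $x_1$. By construction $\boldsymbol{\mu}_{x_1}$ has first marginal $\gamma^{12}_{x_1}$ and second marginal $\gamma^{13}_{x_1}$, so $\boldsymbol{\mu}_{x_1}\in\Gamma(\gamma^{12}_{x_1},\gamma^{13}_{x_1})$ automatically, which already exhibits a concrete glued measure and proves existence.

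Second, I would check that this $\boldsymbol{\mu}$ is a well-defined probability measure satisfying the required marginal conditions. Well-definedness amounts to Borel measurability of $x_1\mapsto\boldsymbol{\mu}_{x_1}$, which follows since each of $x_1\mapsto\gamma^{12}_{x_1}$ and $x_1\mapsto\gamma^{13}_{x_1}$ is Borel and the product map $(\rho,\sigma)\mapsto\rho\otimes\sigma$ is continuous for the weak topology; total mass one is then immediate from Fubini. To verify $\pi^{1,2}_{\hash}\boldsymbol{\mu}=\gamma^{12}$, I would test against $f\in C_b(\R^K\times\R^M)$, integrate using the disintegration and Fubini on each fiber, and use that the $\R^M$-marginal of $\boldsymbol{\mu}_{x_1}$ is $\gamma^{12}_{x_1}$; this identifies $\pi^{1,2}_{\hash}\boldsymbol{\mu}$ with $\int_{\R^K}\gamma^{12}_{x_1}d\mu^1(x_1)=\gamma^{12}$. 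The identity $\pi^{1,3}_{\hash}\boldsymbol{\mu}=\gamma^{13}$ is symmetric.

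Finally, for the equivalence in the ``moreover'' clause, I would argue both directions through the a.e. uniqueness of the disintegration of $\boldsymbol{\mu}$ with respect to $\mu^1$. If $\boldsymbol{\mu}$ is any measure with $\boldsymbol{\mu}=\int_{\R^K}\boldsymbol{\mu}_{x_1}d\mu^1(x_1)$ and $\boldsymbol{\mu}_{x_1}\in\Gamma(\gamma^{12}_{x_1},\gamma^{13}_{x_1})$ for a.e. $x_1$, then the computation of the previous paragraph yields $\pi^{1,2}_{\hash}\boldsymbol{\mu}=\gamma^{12}$ and $\pi^{1,3}_{\hash}\boldsymbol{\mu}=\gamma^{13}$. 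Conversely, if these two marginal identities hold, then disintegrating $\pi^{1,2}_{\hash}\boldsymbol{\mu}$ and $\pi^{1,3}_{\hash}\boldsymbol{\mu}$ with respect to $\mu^1$ and invoking uniqueness forces the first and second marginals of $\boldsymbol{\mu}_{x_1}$ to coincide with $\gamma^{12}_{x_1}$ and $\gamma^{13}_{x_1}$ for $\mu^1$-a.e. $x_1$, i.e. $\boldsymbol{\mu}_{x_1}\in\Gamma(\gamma^{12}_{x_1},\gamma^{13}_{x_1})$ a.e. I expect the main obstacle to be precisely the bookkeeping around this uniqueness: one must verify that the fiberwise marginals of a disintegration commute with pushforward under the coordinate projections, so that ``marginal of the glued measure'' and ``glued family of the marginals'' agree $\mu^1$-a.e. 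The product coupling is only one admissible choice of fibers; the content of the lemma is that \emph{any} measurable a.e. selection of fiberwise couplings works, which is exactly what the equivalence records.
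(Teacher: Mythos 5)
The paper does not prove this lemma at all; it is quoted directly from \cite{AGS2005}, so the only ``proof'' in the paper is the citation. Your argument is correct and is essentially the standard proof given in that reference: disintegrate both plans over the common marginal $\mu^1$, glue fiberwise with the product coupling $\gamma^{12}_{x_1}\otimes\gamma^{13}_{x_1}$ to establish existence, and derive the ``moreover'' equivalence from the $\mu^1$-a.e.\ uniqueness of disintegrations together with the observation (which you correctly single out as the key bookkeeping step) that the fiberwise marginals $(\pi^1)_{\hash}\boldsymbol{\mu}_{x_1}$ and $(\pi^2)_{\hash}\boldsymbol{\mu}_{x_1}$ form disintegrations of $\pi^{1,2}_{\hash}\boldsymbol{\mu}$ and $\pi^{1,3}_{\hash}\boldsymbol{\mu}$ with respect to $\mu^1$.
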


Now let us consider a probabilistic frame $\mu$ and another probability measure $\eta$ and take $\gamma \in\Gamma(\mu,\eta)$.  From Lemma \ref{gluings}, there is a set of conditional probability measures $\{\gamma(\cdot|w)\}_{w\in\Rd}$ that are uniquely defined $\eta$-a.e. To proceed with the construction of analysis and synthesis in the probabilistic context, we will first establish a useful fact. Recall that  $$\LL{\RN\times\RM,\RK}{\gamma}:=\{f:\RN\times\RM\rightarrow\RK\quad |\quad \iint \norm{f(x,y)}_2^2 d\gamma(x,y)<\infty\}.$$ Then, by condition Jensen's inequality, if f $f\in \LL{\RdxRd,\Rd}{\gamma},$ it follows that $g(w):=\int_{\Rd}f(y,w)d\gamma(y|w)$ is in $\LL{\Rd,\Rd}{\eta}$.

%

\noindent Finally, since $h(z,w):=\norm{z}_2\in\LL{\RdxRd,\R}{\gamma}$ for any $\gamma\in\Gamma(\mu,\eta)$ provided that $\mu\in P_2(\Rd)$, it follows that the vector-valued function $\int zd\gamma(z|w)$ lies in $\LL{\Rd,\Rd}{\eta}$.

To define analysis and synthesis operators which are more closely tied to their probabilistic frames, a reference measure must be chosen; take an absolutely continuous $\eta \in P_2(\Rd)$ whose support is $\Rd.$ Given $\mu\in \PF,$ we define families of analysis and synthesis operators for $\mu$ with respect to $\eta$.

\begin{definition}\label{analysis_synthesis_defn}
$\{A_{\mu}^{\gamma}\}_{\gamma\in\Gamma(\mu,\eta)}$  is the family of analysis operators, and for each $\gamma \in \Gamma(\mu,\eta)$ we have: $A_{\mu}^{\gamma}:\Rd\rightarrow \LL{\Rd,\Rd}{\eta}$, is given by
$$A_{\mu}^{\gamma}(x)(w)=\int_{\Rd}\ip{x}{y}d\gamma(y|w).$$
Similarly, the family of synthesis operators, $\{Z_{\mu}^{\gamma}\}_{\gamma\in\Gamma(\mu,\eta)}$ is defined for each $\gamma \in \Gamma(\mu,\eta)$ by $Z_{\mu}^{\gamma} :\LL{\Rd,\R}{\eta}\rightarrow\Rd$, given by
$$Z_{\mu}^{\gamma}(f)	=\iint_{\RdxRd} zf(w)d\gamma(z|w)d\eta(w)$$ 
\end{definition}

\noindent The class of reference measure $\eta$ was chosen such that, for any probabilistic frame $\mu,$ the probabilistic analysis and synthesis operators can be constructed using deterministic couplings between $\eta$ and $\mu$.

There are several interesting ways to pair disparate types of probabilistic frames with one another. A useful technique is the transport of an absolutely continuous measure to a discrete measure using power (Voronoi) cells. Following \cite{Merigot}, we define maps which can be used for these pairings.  It is an interesting fact due to Brenier that the Voronoi mapping we will describe, $T_P^w,$ is in fact an optimal map between the two measures it couples, $\mu$ and $T_P^w|_{\hash}\mu,$ when $\mu$ is absolutely continuous with respect to Lebesgue measure \cite[Theorem 1]{Merigot}.

\begin{definition}\label{voronoi_diagram}
Given a probability measure $\mu$ on $\Rd$, a finite set $P$ of points in $\Rd$ and $w:P\rightarrow\R_{+}$ a weight vector, the power diagram or weighted Voronoi diagram of $(P,w)$ is a decomposition of $\Rd$ into cells corresponding to each member of $P$.  Given $p\in P$, a point $x \in \Rd$ belongs to $\Vor_P^w(p)$ if and only if for every $q\in P$, $$\norm{x-p}^2-w(p)\leq \norm{x-q}^2-w(q).$$

Let  $T_P^w$ be the map that assigns to  each  $x$ in a power cell $\Vor_P^w(p)$ to $p$, the ``center'' of that power cell. We call $T_P^w$  the weighted Voronoi mapping.  $$T_P^w|_{\hash}\mu=\sum\limits_{p\in P}\mu(\Vor_P^w(p))\delta_p.$$

Let $\eta$ be an absolutely continuous measure in $P_2(\Rd)$, and let $\nu=\sum_{p\in P}\lambda_p\delta_p$ be a discrete measure in $P_2(\Rd)$ supported on a finite set of points $P$ with weights $\{\lambda_p\}$ summing to unity.  Then we say that a vector weight $w:P\rightarrow\R_{+}$ is adapted to $(\eta,\nu)$ if for all $p\in P$, $\lambda_p=\eta(\Vor_P^w(p))=\int_{\Vor_P^w(p)}d\eta(x)$.
\end{definition}

\begin{example}
Now given discrete frames $\Phi=\{\varphi_i\}_{i=1}^M$ and $\Psi=\{\psi_j\}_{j=1}^N$ for $\Rd$, and $\eta$ a reference measure in Definition \ref{analysis_synthesis_defn}, choose $\gamma_1=(\iota,T_{\Phi}^{w_1})_{\hash}\eta$ and $\gamma_2=(\iota,T_{\Psi}^{w_2})_{\hash}\eta$, where the weights $w_1$ and $w_2$ are adapted to $(\mu_{\Phi},\eta)$ and $(\mu_{\Psi},\eta)$, respectively.  Then $$Z_{\mu_{\Psi}}^{\gamma_2}(A_{\mu_{\Phi}}^{\gamma_1}(x))=\int\ip{x}{T_{\Phi}^{w_1}(y)}T_{\Psi}^{w_2}(y)d\eta(y).$$
\end{example}

\begin{example}\emph{Recovering the old definitions of analysis and synthesis}

In the special case $M=N$, we could choose $P=\{p_i\}_{i=1}^N\subset\Rd$ and $w_0$ adapted to $(\mu_P,\eta)$. Then let $f_{\Psi}:P\rightarrow\Psi$  be given by $f_{\Psi}(p_i)=\psi_i$, and let $f_{\Phi}:P\rightarrow\Phi$ be similarly defined. Then if $\gamma_1=(\iota,f_{\Phi}\circ T_{P}^{w_0})_{\hash}\eta$ and $\gamma_2=(\iota,f_{\Psi}\circ T_{P}^{w_0})_{\hash}\eta$, it follows that $$Z_{\mu_{\Psi}}^{\gamma_2}(A_{\mu_{\Phi}}^{\gamma_1}(x))=\int\ip{x}{f_{\Phi}\circ T_P^{w_0}(y)}f_{\Psi}\circ T_P^{w_0}(y)d\eta(y)=\sum_{i=1}^N\ip{x}{\varphi_i}\psi_i.$$
Hence, we have recovered the analysis and synthesis operation of finite frames.
\end{example}

\begin{example}\emph{Discrete dual to absolutely continuous probabilistic frame}

Finally, choose a frame contained in the support of $\eta$, say $\{\psi_i\}_{i=1}^N$.  Let $T_{\Psi}^w$ be the transport map between $\eta$ and $\mu_{\Psi}$, as constructed above. Choose $\{\varphi_i\}_{i=1}^N$ to be any dual to $\{\psi_i\}_{i=1}^N$, and let $f:\Psi\rightarrow\Phi$  be given by $f(\psi_i)=\varphi_i$. Then $\gamma=(\iota,f\circ T_{\Psi}^w)_{\hash}\eta \in P_2(\RdxRd)$ is a joint transport plan in $\Gamma(\eta,\mu_{\Psi})$ such that $\iint xy^{\top}d\gamma(x,y)=\int x T_{\Psi}^w(x)d\eta(x)=I$, so that $\eta$ and $\mu_{\Psi}$ are dual to one another in $\PF$.
\end{example}

\section{Paths of Frames: Geodesics for the Wasserstein Space}\label{sec3}
A number of important questions in finite frame theory involve determining distances between frames and constructing new frames. In this section we consider geodesics in $P_2(\Rd)$ and investigate conditions under which probability measures on these paths are probabilistic frames. As we shall prove, in  the case of discrete probabilistic frames, this question is equivalent to one of ranks of convex combinations of matrices. Furthermore, for probabilistic frames with density, a sufficient condition for geodesic measures to be probabilistic frames is the continuity of the optimal deterministic coupling. This question has ramifications for constructions of paths of frames in general, for frame optimization problems, and for our understanding of the geometry of $PF(\Rd).$

\subsection{Wasserstein Geodesics}
In constructing paths of probabilistic frames, minimal paths between frames in $P_2(\Rd)$ are a natural place to start since $\PF$ is not closed. We follow the construction of geodesics in the Wasserstein space given in \cite{Gangbo2004}. To this end, given $t\in [0, 1]$ define  $\Pi^t:\RdxRd\rightarrow\Rd$  as $\Pi^t(x,y) = (x, (1-t)x+ty).$ For  $\mu_0, \mu_1 \in P_2(\Rd)$, take  $\gamma_0 \in \Gamma(\mu_0,\mu_1)$ to be  an optimal transport plan for $\mu_0$ and $\mu_1$ with respect to the 2-Wasserstein distance. Then let the interpolating joint probability measure be $\gamma^t$ on $\Rd \times \Rd,$ given by:
$$\iint_{\RdxRd}F(x,y)d\gamma^t(x,y) = \iint_{\RdxRd}F(\Pi^t(x,y))d\gamma_0(x,y)$$

\noindent for all $F \in C_b(\RdxRd).$ In particular, for $F \in C_b(\Rd)$,  $$\iint_{\RdxRd}F(x)d\gamma^t(x,y) = \iint_{\RdxRd}F(x)d\gamma_0(x,y) = \int_{\Rd}F(x)d\mu_0(x).$$ 

\noindent Given $t\in [0,1]$  let  $\mu_t $  be the probability measure such that for all $G\in C_b(\Rd)$: 
\begin{equation}\label{geodesic_meas}
\int_{\Rd}G(y)d\mu_t(y)=\iint_{\RdxRd}G(y)d\gamma^t(x,y) = \iint_{\RdxRd}G((1-t)x+ty)d\gamma_0(x,y),
\end{equation}
\noindent we call $\mu_t$ a geodesic measure with respect to $\mu_0$ and $\mu_1$.  Indeed, the mapping $t\rightarrow \mu_t$ is truly a geodesic of the 2-Wasserstein distance in the sense that $$W_2(\mu_0, \mu_t)+W_2(\mu_t,\mu_1)=W_2(\mu_0,\mu_1).$$ 

Recall that a probability measure $\mu$ on $\Rd$ is a probabilistic frame if it is an element of $P_2(\Rd)$ and if $S_{\mu}$ is positive definite. It is easy to show that $\mu_t$, as constructed  by the method above, always meets the first requirement.

\begin{lemma}\label{path_bound}
For any measure $\mu_t$, $t\in [0,1]$, on the geodesic between two probabilistic frames $\mu_0$ and $\mu_1,$ $M_2^2(\mu_t)<\infty.$ 
\end{lemma}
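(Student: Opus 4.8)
The plan is to feed the single test function $\norm{\cdot}^2$ into the defining relation \eqref{geodesic_meas} and then exploit convexity of the squared norm. First I would observe that $\mu_t$ is exactly the pushforward of the optimal plan $\gamma_0$ under the affine map $(x,y)\mapsto (1-t)x+ty$, so that taking $G(y)=\norm{y}^2$ gives
$$M_2^2(\mu_t)=\int_{\Rd}\norm{y}^2\,d\mu_t(y)=\iint_{\RdxRd}\norm{(1-t)x+ty}^2\,d\gamma_0(x,y).$$
This is the crucial reduction: the second moment of the geodesic measure is expressed as an integral against the fixed plan $\gamma_0$, whose marginals are the known probabilistic frames $\mu_0$ and $\mu_1$.

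Next I would bound the integrand pointwise. Since $t\in[0,1]$ and $z\mapsto\norm{z}^2$ is convex, Jensen's inequality with weights $1-t$ and $t$ yields
$$\norm{(1-t)x+ty}^2\leq (1-t)\norm{x}^2+t\,\norm{y}^2.$$
Integrating this estimate against $\gamma_0$ and using $\pi^1_{\hash}\gamma_0=\mu_0$, $\pi^2_{\hash}\gamma_0=\mu_1$ gives
$$M_2^2(\mu_t)\leq (1-t)\int_{\Rd}\norm{x}^2\,d\mu_0(x)+t\int_{\Rd}\norm{y}^2\,d\mu_1(y)=(1-t)M_2^2(\mu_0)+t\,M_2^2(\mu_1).$$
Because $\mu_0$ and $\mu_1$ are probabilistic frames, both lie in $P_2(\Rd)$, so each term on the right is finite and the claim follows. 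In fact the argument shows the stronger statement that $t\mapsto M_2^2(\mu_t)$ is bounded by the affine interpolant of the endpoint second moments.

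The one step that requires a moment of care — and the only place where the argument steps outside the hypotheses as literally stated — is that \eqref{geodesic_meas} is asserted for $G\in C_b(\Rd)$, whereas $\norm{\cdot}^2$ is unbounded. I expect this to be the main (though still routine) obstacle. I would resolve it either by invoking the pushforward description of $\mu_t$ directly, since the change-of-variables identity $\int G\,d\mu_t=\iint G((1-t)x+ty)\,d\gamma_0$ holds for every nonnegative measurable $G$ by the very definition of the pushforward measure, or by applying \eqref{geodesic_meas} to the bounded truncations $G_R(y)=\min\!\big(\norm{y}^2,R\big)$ and letting $R\to\infty$ via the monotone convergence theorem. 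Either route promotes the identity from bounded continuous test functions to the quadratic weight, after which the convexity estimate above finishes the proof.
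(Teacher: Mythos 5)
Your proof is correct. The paper actually offers no proof of this lemma at all---it is stated immediately after the remark ``It is easy to show that $\mu_t$, as constructed by the method above, always meets the first requirement''---and your argument (the pushforward identity $M_2^2(\mu_t)=\iint\norm{(1-t)x+ty}^2\,d\gamma_0(x,y)$ followed by convexity of $\norm{\cdot}^2$ and the marginal conditions on $\gamma_0$) is exactly the routine argument the authors are implicitly invoking. Your attention to the technicality that \eqref{geodesic_meas} is only stated for $G\in C_b(\Rd)$, resolved either via the measure-theoretic definition of pushforward or by truncation and monotone convergence, is a point of care the paper glosses over entirely, and either resolution you propose is sound.
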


Showing that $S_{\mu_t}$ is positive definite, or, equivalently, that the support of $\mu_t$ spans $\Rd$ depends on the characteristics of the support of the measures at the endpoints.  For this reason, it is natural to divide the analysis into two parts: the discrete case and the absolutely continuous case. In both, a monotonicity property that characterizes optimal transport plans will play a key role. 

\subsection{Probabilistic Frames with Discrete Support}

For the canonical discrete probabilistic frames with uniform weights, we have:
\begin{lemma}\emph{\cite[Theorem 6.0.1]{AGS2005}}
Given $\mu_0=\mu_{\Phi}$ and $\mu_1=\mu_{\Psi},$ discrete probabilistic frames with supports of equal cardinality $N,$ uniformly weighted, the Monge-Kantorovich problem simplifies, and denoting by $\Gamma(\frac{1}{N})$ the set of matrices with row and column sums identically $\frac{1}{N}$:
$$W_2^2(\mu_0,\mu_1) = \min_{A\in \Gamma(\frac{1}{N})}\sum\limits_{i=1}^N\sum\limits_{j=1}^N a_{i,j}\norm{\varphi_i-\psi_j}^2$$
and, by the Birkhoff-von Neumann Theorem, the optimal transport matrix $A$ is a permutation matrix corresponding to some $\sigma\in \Pi_N$, i.e.:
$$W_2^2(\mu_0,\mu_1) =\min_{\sigma\in \Pi_N} \frac{1}{N} \sum\limits_{i=1}^N \norm{\varphi_i-\psi_{\sigma(i)}}^2$$
\end{lemma}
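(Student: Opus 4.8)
The plan is to reduce the Monge--Kantorovich optimal transport problem, which is an infimum over the infinite-dimensional set of couplings $\Gamma(\mu_0,\mu_1)$, to a finite-dimensional linear program over matrices, and then to apply the Birkhoff--von Neumann theorem. First I would observe that since $\mu_0$ and $\mu_1$ are supported on the finite sets $\{\varphi_i\}_{i=1}^N$ and $\{\psi_j\}_{j=1}^N$, any $\gamma\in\Gamma(\mu_0,\mu_1)$ is forced to concentrate on the product grid $\{\varphi_i\}\times\{\psi_j\}$: indeed, for any Borel set $A$ disjoint from $\supp(\mu_0)$ the marginal condition gives $\gamma(A\times\Rd)=\mu_0(A)=0$, and symmetrically in the second coordinate. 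Hence every admissible $\gamma$ has the form $\gamma=\Sum{i}{N}\Sum{j}{N}a_{i,j}\delta_{(\varphi_i,\psi_j)}$, and the marginal constraints translate into $a_{i,j}\geq 0$, $\Sum{j}{N}a_{i,j}=\tfrac{1}{N}$, and $\Sum{i}{N}a_{i,j}=\tfrac{1}{N}$. This is precisely the one-to-one correspondence between $\Gamma(\mu_{\Phi},\mu_{\Psi})$ and $\Gamma(\tfrac{1}{N})$ already recorded in the discussion of doubly stochastic matrices above, specialized to the uniform weights $\alpha=\beta=\tfrac{1}{N}z_N$.

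Under this correspondence the transport cost becomes the linear functional
$$\iint_{\RdxRd}\norm{x-y}^2\,d\gamma(x,y)=\Sum{i}{N}\Sum{j}{N}a_{i,j}\norm{\varphi_i-\psi_j}^2,$$
so the infimum defining $W_2^2(\mu_0,\mu_1)$ equals $\min_{A\in\Gamma(1/N)}\Sum{i}{N}\Sum{j}{N}a_{i,j}\norm{\varphi_i-\psi_j}^2$. The infimum is in fact attained because $\Gamma(\tfrac{1}{N})$ is a closed, bounded polytope in $\R^{N\times N}$, hence compact, and the objective is continuous (indeed linear). This establishes the first displayed equality.

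For the second equality I would invoke the fact that a linear functional on a compact convex polytope attains its minimum at an extreme point. The set $\Gamma(\tfrac{1}{N})$ is exactly $\tfrac{1}{N}$ times the Birkhoff polytope of doubly stochastic matrices, so by the Birkhoff--von Neumann theorem its extreme points are the scaled permutation matrices $\tfrac{1}{N}P_\sigma$, $\sigma\in\Pi_N$. Choosing an optimal vertex $A=\tfrac{1}{N}P_\sigma$ and substituting $a_{i,j}=\tfrac{1}{N}$ when $j=\sigma(i)$ and $a_{i,j}=0$ otherwise collapses the double sum to $\tfrac{1}{N}\Sum{i}{N}\norm{\varphi_i-\psi_{\sigma(i)}}^2$, yielding the claimed permutation form of $W_2^2(\mu_0,\mu_1)$.

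The individual steps are routine, so the real content lies in the reduction of the first paragraph; the main thing to get right is the justification that every coupling is supported on the finite grid, which is what makes the passage from measures to matrices lossless and the correspondence a genuine bijection rather than merely a surjection. Once that is in hand, the compactness of $\Gamma(\tfrac{1}{N})$ and the vertex characterization supplied by Birkhoff--von Neumann make both equalities immediate.
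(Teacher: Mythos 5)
Your proof is correct and follows exactly the route the paper intends: the paper itself only cites this lemma (to \cite{AGS2005}) and has already recorded, in its discussion of transport duals, the same one-to-one correspondence between couplings of discrete measures and ``doubly stochastic'' matrices that you establish in your first paragraph. Your reduction to the finite linear program, the attainment of the minimum by compactness of the polytope, and the passage to a permutation matrix via the Birkhoff--von Neumann vertex characterization are all sound and fill in precisely the standard argument the lemma's statement alludes to.
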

\noindent In this case, for some optimal $\sigma\in \Pi_N$, 
\begin{equation}\label{DPF_sum_form}
S_{\mu_t}:=\frac{1}{N}\Sum{i}{N}[(1-t)\varphi_i+t\psi_{\sigma(i)}][(1-t)\varphi_i+t\psi_{\sigma(i)}]^{\top}.
\end{equation}

\noindent The optimality of $\sigma$ implies that $\sigma$ maximizes $\sum\limits_{i=1}^N\ip{\varphi_i}{\psi_{\sigma(i)}}$ among all elements of $\Pi_N,$ and this crucial fact motivates the definition of a monotonicity condition. 

\begin{definition}
A set $S\subset \RdxRd$  is said to be cyclically monotone if, given any finite subset $\{(x_1,y_1),...,(x_N,y_N)\}\subset S,$ for every $\sigma\in S_N$ holds the inequality: $$\sum_{i=1}^N \ip{x_i}{y_i} \geq \sum_{i=1}^N \ip{x_i}{y_{\sigma(i)}}.$$
\end{definition}

With this definition in hand, the main result of this section can be stated:

\begin{theorem}\label{first_geod_thm}
Let $\{\varphi_i\}_{i=1}^N$ and $\{\psi_i\}_{i=1}^N$ be frames for $\Rd$. If $\Psi^{\dagger}\Phi$ has no negative eigenvalues and $\{(\varphi_i,\psi_i)\}_{i=1}^N$ is cyclically monotone, then every measure on the geodesic between the canonical probabilistic frames $\mu_{\Phi}$ and $\mu_{\Psi}$ is a probabilistic frame.
\end{theorem}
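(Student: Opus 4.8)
The plan is to use Lemma~\ref{path_bound} to dispose of the second-moment requirement, reducing the problem to showing that $S_{\mu_t}$ is positive definite for every $t\in[0,1]$ (equivalently, that $\supp(\mu_t)$ spans $\Rd$). First I would pin down the geodesic support. Cyclical monotonicity of $\{(\varphi_i,\psi_i)\}_{i=1}^N$ says exactly that the identity permutation maximizes $\Sum{i}{N}\ip{\varphi_i}{\psi_{\sigma(i)}}$ over $\Pi_N$; since minimizing $\Sum{i}{N}\norm{\varphi_i-\psi_{\sigma(i)}}^2$ is equivalent to maximizing that inner-product sum (the two self-terms being $\sigma$-independent, as already noted before \eqref{DPF_sum_form}), the identity pairing is an optimal coupling. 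Hence \eqref{DPF_sum_form} applies with $\sigma=\iota$, giving
$$S_{\mu_t}=\frac{1}{N}\Sum{i}{N}\big[(1-t)\varphi_i+t\psi_i\big]\big[(1-t)\varphi_i+t\psi_i\big]^{\top}.$$

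Next I would examine the associated quadratic form. For $x\in\Rd$,
$$N\,x^{\top}S_{\mu_t}x=\Sum{i}{N}\big[(1-t)\ip{x}{\varphi_i}+t\ip{x}{\psi_i}\big]^2\geq 0,$$
with equality precisely when $(1-t)\ip{x}{\varphi_i}+t\ip{x}{\psi_i}=0$ for every $i$. So $S_{\mu_t}$ fails to be positive definite if and only if some nonzero $x$ satisfies this system. The endpoints are immediate: at $t=0$ (resp. $t=1$) degeneracy would force $\ip{x}{\varphi_i}=0$ (resp. $\ip{x}{\psi_i}=0$) for all $i$, contradicting that $\Phi$ (resp. $\Psi$) is a frame.

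The crux is the interior range $t\in(0,1)$. Here I would set $c:=-t/(1-t)$, so that $c<0$, and rewrite the degeneracy condition as $\ip{x}{\varphi_i}=c\,\ip{x}{\psi_i}$ for all $i$, i.e. $\Phi x=c\,\Psi x$ as vectors in $\R^N$. Because $\Psi$ is a frame, its analysis operator has full column rank $d$, so $\Psi^{\dagger}\Psi=I$; applying $\Psi^{\dagger}$ to both sides yields $\Psi^{\dagger}\Phi\,x=c\,\Psi^{\dagger}\Psi\,x=c\,x$. Thus any nonzero degenerate direction $x$ is a real eigenvector of $\Psi^{\dagger}\Phi$ with the strictly negative eigenvalue $c$, which the hypothesis forbids. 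Hence no such $x$ exists, $S_{\mu_t}$ is positive definite for all $t$, and together with Lemma~\ref{path_bound} this shows $\mu_t\in\PF$ for every $t\in[0,1]$.

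I expect the main obstacle to be bookkeeping rather than depth: one must confirm that cyclical monotonicity genuinely selects the identity pairing as the optimal coupling (so that \eqref{DPF_sum_form} is available), and that the pseudoinverse step $\Psi^{\dagger}\Psi=I$ is legitimate, which rests on the full-column-rank of a frame's analysis operator. It is worth emphasizing that the two hypotheses play complementary roles: cyclical monotonicity fixes the geometry of the geodesic support through \eqref{DPF_sum_form}, while the no-negative-eigenvalue condition on $\Psi^{\dagger}\Phi$ rules out exactly the interior cancellations $(1-t)\varphi_i+t\psi_i$ that could collapse the span at some intermediate time.
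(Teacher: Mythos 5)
Your proof is correct, and it reaches the conclusion by a more self-contained route than the paper does. The top-level structure coincides: the paper also invokes Lemma~\ref{path_bound} for the second moment, and also uses cyclical monotonicity exactly as you do, to conclude that the identity permutation is optimal so that \eqref{DPF_sum_form} holds with $\sigma=\iota$ and $S_{\mu_t}=\frac{1}{N}\left((1-t)\Phi^{\top}+t\Psi^{\top}\right)\left((1-t)\Phi+t\Psi\right)$, reducing everything to showing that $(1-t)\Phi+t\Psi$ has rank $d$ for all $t\in[0,1]$. The divergence is in how that rank statement is established: the paper (Proposition~\ref{proposition_DP}) cites the matrix-theoretic Lemma~\ref{szulc_lemma} of Szulc \cite{Szulc1996} as a black box, whereas you prove it directly --- a nonzero $x$ annihilated by the quadratic form at some $t\in(0,1)$ satisfies $\Phi x=c\,\Psi x$ with $c=-t/(1-t)<0$, and applying $\Psi^{\dagger}$ (legitimate because the analysis operator of a frame has full column rank, so $\Psi^{\dagger}\Psi=I$) exhibits $x$ as an eigenvector of $\Psi^{\dagger}\Phi$ with the negative real eigenvalue $c$, contradicting the hypothesis; the endpoints $t=0,1$ follow from the frame property of $\Phi$ and $\Psi$. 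In effect you have given an elementary proof of precisely the case of Szulc's theorem the paper needs (real matrices of full column rank, the family $h(\Phi,\Psi)$), and your argument makes transparent why ``no negative eigenvalues'' is exactly the right hypothesis: kernel vectors of the interpolant at interior times correspond to negative-eigenvalue eigenvectors of $\Psi^{\dagger}\Phi$. What the citation buys the paper is generality and brevity (Szulc's lemma also covers the diagonal-interpolation family $r(A,B)$, which is not needed here); what your argument buys is independence from the external reference and a cleaner logical picture of how the two hypotheses divide the labor.
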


The proof of this theorem will follow from  Lemma \ref{path_bound} and Proposition \ref{proposition_DP}, proven below. To prove Proposition \ref{proposition_DP}, the following lemma from matrix theory is necessary:

\begin{lemma}{\cite[Theorem 2]{Szulc1996}}\label{szulc_lemma}
Let $A$ and $B$ be $m\times n$ complex matrices, $m\geq n$. Let $rank(A)=rank(B)=n$. If $B^{\dagger}A$ has no nonnegative eigenvalues, then every matrix in $$h(A,B):=\{(1-t)A+tB,\quad t\in[0,1]\}$$ has rank $n$.  Similarly, if $A$ and $B$ are $n\times n$ complex matrices with rank $n$, we can define in $$r(A,B):=\{(I-T)A+TB\},$$ where $T$ is a real diagonal matrix with diagonal entries in $[0,1]$.  Then, if $B^{-1}A$ is  such that all its principal minors are positive, then every matrix in $r(A,B)$ will have rank $n$.
\end{lemma}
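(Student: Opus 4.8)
The plan is to prove both halves by the same template: a matrix in the family fails to have rank $n$ exactly when it has a nontrivial kernel, so I would fix a hypothetical kernel vector and show it forces a forbidden eigenvalue or sign pattern on the relevant matrix product. The endpoints are always handled for free by $\mathrm{rank}(A)=\mathrm{rank}(B)=n$, so the content is entirely in the interior of the parameter region.

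For the first part, suppose $((1-t)A+tB)x=0$ with $x\neq 0$ and $t\in(0,1)$, and rearrange to $Ax=\tfrac{-t}{1-t}Bx$. The key algebraic fact is that a full-column-rank $B$ satisfies $B^{\dagger}B=I_n$, since $B^{\dagger}=(B^{*}B)^{-1}B^{*}$; applying $B^{\dagger}$ to both sides then gives $B^{\dagger}Ax=\tfrac{-t}{1-t}x$, exhibiting $x$ as an eigenvector of $B^{\dagger}A$ with eigenvalue $\tfrac{-t}{1-t}<0$. Thus an interior rank drop produces a negative real eigenvalue of $B^{\dagger}A$, and the contrapositive is the claim. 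I note that the operative hypothesis this argument yields is that $B^{\dagger}A$ has no \emph{negative} eigenvalue, which is exactly the form used in Theorem~\ref{first_geod_thm}; the phrase ``no nonnegative eigenvalues'' in the statement should read ``no negative eigenvalues.''

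For the second part I would first strip off an invertible endpoint. Writing $M(T)=(I-T)A+TB=[(I-T)+T\,BA^{-1}]A$ and using that $A$ is invertible, $M(T)$ is singular iff $N(T):=(I-T)+TD$ is singular, where $D=BA^{-1}$; factoring on the other side instead throws the diagonal weights onto $B^{-1}A$, so which product actually appears is a bookkeeping consequence of whether the weights $T$ sit to the left or the right, and it must be matched to the placement of $T$ in the definition of $r(A,B)$. Now suppose $N(T)a=0$ with $a\neq 0$. Reading this coordinatewise, $(1-t_i)a_i+t_i(Da)_i=0$, so for each $i$ either $a_i=0$, or $(Da)_i=0$, or $(Da)_i=\tfrac{-(1-t_i)}{t_i}a_i$; in every case $a_i(Da)_i\leq 0$. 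Hence $a$ is a nonzero vector with no index $i$ satisfying $a_i(Da)_i>0$, which by the Fiedler--Pt\'ak characterization of P-matrices (a real matrix is a P-matrix iff for every nonzero $x$ some coordinate has $x_i(Cx)_i>0$) contradicts the hypothesis that the relevant product is a P-matrix.

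The main obstacle is the passage to complex matrices, for which this sign characterization is a genuinely real-variable statement and does not transfer verbatim. It is tempting to argue instead that $\det N(T)$ is a multiaffine function of $(t_1,\dots,t_n)$ whose values at the vertices of $[0,1]^n$ are precisely the principal minors of $D$, hence positive; but positivity at the vertices does not by itself prevent a multiaffine function from vanishing in the interior of the cube, so this shortcut fails and the P-matrix sign structure is really needed. I would therefore either restrict to the real case (all that Theorem~\ref{first_geod_thm} requires) or invoke the complex analogue of the P-matrix nonsingularity criterion, and I expect establishing that analogue to be the technical heart of the argument.
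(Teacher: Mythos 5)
The paper offers no proof of this lemma to compare against: it is quoted from Szulc's paper \cite[Theorem 2]{Szulc1996} (with transcription slips, as you noticed), so your attempt has to stand on its own. Your first part does: $B^{\dagger}B=I_n$ for full column rank $B$, so an interior kernel vector gives $B^{\dagger}Ax=\tfrac{-t}{1-t}x$, and a rank drop forces a negative real eigenvalue of $B^{\dagger}A$. You are also right that the operative hypothesis is ``no negative eigenvalues'' (this is the form actually invoked in Theorem~\ref{first_geod_thm} and Proposition~\ref{proposition_DP}), and your bookkeeping remark about the second part is equally a genuine correction: with $T$ on the left the product that appears is $BA^{-1}$ (equivalently $AB^{-1}$), not $B^{-1}A$, and since the P-matrix property is not similarity-invariant the distinction matters.

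The genuine flaw is your dismissal of the determinant shortcut, and it is exactly what costs you the complex case. A function of $(t_1,\dots,t_n)$ that is affine in each variable separately is determined by its vertex values and equals its multilinear interpolation
$$f(t)=\sum_{S\subseteq\{1,\dots,n\}}\Bigl(\prod_{i\in S}t_i\prod_{i\notin S}(1-t_i)\Bigr)f(\chi_S),$$
where $\chi_S$ is the vertex with $t_i=1$ exactly for $i\in S$; on $[0,1]^n$ the weights are nonnegative and sum to $1$, so such a function \emph{is} a convex combination of its vertex values, and positivity at the vertices does force positivity on the whole cube — your claim to the contrary is false. Concretely, expanding $\det\bigl((I-T)+TD\bigr)$ multilinearly row by row (row $i$ being $(1-t_i)e_i^{\top}+t_iD_i$) gives
$$\det\bigl((I-T)+TD\bigr)=\sum_{S}\Bigl(\prod_{i\in S}t_i\prod_{i\notin S}(1-t_i)\Bigr)\det D_{SS},$$
a convex combination of the principal minors of $D$ (with the convention $\det D_{\emptyset\emptyset}=1$). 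Since ``all principal minors positive'' means they are positive \emph{real} numbers even when $D$ is complex, this determinant is strictly positive for every admissible $T$, which proves the second part for complex matrices outright. So the step you expected to be the technical heart — a complex analogue of the Fiedler--Pt\'ak criterion — is not needed at all; the argument you rejected is the proof, and your real-case sign-reversal argument, while correct, is the detour.
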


Combining the cyclical monotonicity condition with Lemma \ref{szulc_lemma}, we can state the following result which gives sufficient conditions for a geodesic between discrete probability measures in $P_2(\Rd)$ to be a path of frames. We note that little can be claimed about the spectra of the frame operators along the path (i.e., the frame bounds of the probabilistic frames along the geodesic) in general, other than their boundedness away from zero.

\begin{proposition}\label{proposition_DP}
Let $\{\varphi_i\}_{i=1}^N$ and $\{\psi_i\}_{i=1}^N$ be frames for $\Rd$ with analysis operators $\Phi$ and $\Psi$. Denoting by $Psi^{\dagger}$ the Moore-Penrose pseudoinverse of $\Psi,$ if $\Psi^{\dagger}\Phi$ has no negative eigenvalues, and if $\{(\varphi_i,\psi_i)\}_{i=1}^N$ is a cyclically monotone set, then every measure $\mu_t$ on the geodesic between $\mu_{\Phi}$ and $\mu_{\Psi}$ has support which spans $\Rd$.
\end{proposition}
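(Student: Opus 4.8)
The plan is to reduce the positive-definiteness of $S_{\mu_t}$ to the nondegeneracy of a single $N\times d$ matrix and then apply Lemma~\ref{szulc_lemma}. First I would use cyclical monotonicity to pin down the optimal coupling. By the preceding lemma the optimal transport between $\mu_\Phi$ and $\mu_\Psi$ is realized by a permutation $\sigma\in\Pi_N$ maximizing $\sum_{i=1}^N\langle\varphi_i,\psi_{\sigma(i)}\rangle$, and the cyclical monotonicity of $\{(\varphi_i,\psi_i)\}_{i=1}^N$ says precisely that the identity permutation attains this maximum. Hence I may take $\sigma=\mathrm{id}$ in \eqref{DPF_sum_form}, so that the geodesic measure is $\mu_t=\frac1N\sum_{i=1}^N\delta_{(1-t)\varphi_i+t\psi_i}$ and $\supp(\mu_t)=\{(1-t)\varphi_i+t\psi_i\}_{i=1}^N$.

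Next I would observe that these support points are exactly the rows of the $N\times d$ matrix $(1-t)\Phi+t\Psi$, so that $\supp(\mu_t)$ spans $\Rd$ if and only if this matrix has full column rank $d$. Because $\Phi$ and $\Psi$ are analysis operators of frames, each has rank $d$, which settles the endpoints $t\in\{0,1\}$. For the interior I would set $A=\Phi$, $B=\Psi$ in Lemma~\ref{szulc_lemma} (with $m=N\ge n=d$): the hypothesis on $\Psi^{\dagger}\Phi$ is the rank condition forcing every element of $h(\Phi,\Psi)=\{(1-t)\Phi+t\Psi:t\in[0,1]\}$ to have rank $d$. It follows that $(1-t)\Phi+t\Psi$ has full column rank for all $t\in[0,1]$, hence $\supp(\mu_t)$ spans $\Rd$ and $S_{\mu_t}$ is positive definite.

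I would keep in reserve a short self-contained argument for the interior step, which I find more transparent than quoting the matrix lemma. Suppose $(1-t)\Phi+t\Psi$ had a nonzero kernel vector $x$ for some $t\in(0,1)$; then $(1-t)\Phi x=-t\Psi x$, and applying $\Psi^{\dagger}$ together with $\Psi^{\dagger}\Psi=I_d$ (valid since $\Psi$ has full column rank) gives $\Psi^{\dagger}\Phi x=-\tfrac{t}{1-t}x$. This exhibits $-\tfrac{t}{1-t}<0$ as a negative eigenvalue of $\Psi^{\dagger}\Phi$, contradicting the hypothesis.

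The step I expect to demand the most care is the bookkeeping of the eigenvalue condition. As $t$ ranges over $(0,1)$ the scalar $-\tfrac{t}{1-t}$ sweeps out all of $(-\infty,0)$, so the sharp condition preventing degeneracy is the absence of a \emph{negative} real eigenvalue of $\Psi^{\dagger}\Phi$; I would verify that this is the orientation in which $A=\Phi$, $B=\Psi$ are fed into Lemma~\ref{szulc_lemma}, since interchanging $\Phi$ and $\Psi$ or reparametrizing the geodesic by $1-t$ flips the relevant sign. The second point I would make explicit is that cyclical monotonicity is exactly what licenses the reduction to $\sigma=\mathrm{id}$; for a nontrivial optimal $\sigma$ the support points are $(1-t)\varphi_i+t\psi_{\sigma(i)}$, and the combined operator then carries a permutation of the rows of $\Psi$ that the eigenvalue hypothesis on $\Psi^{\dagger}\Phi$ no longer governs.
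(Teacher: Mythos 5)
Your proposal is correct and follows essentially the same route as the paper's proof: use cyclical monotonicity to conclude that the identity permutation is optimal for the transport problem, identify $\supp(\mu_t)$ with the rows of $(1-t)\Phi+t\Psi$, and invoke Lemma~\ref{szulc_lemma} to get full column rank for every $t\in[0,1]$. Your ``reserve'' kernel argument --- from $((1-t)\Phi+t\Psi)x=0$ deduce $\Psi^{\dagger}\Phi x=-\tfrac{t}{1-t}\,x$, a negative real eigenvalue --- is a worthwhile addition rather than a digression: it proves exactly the instance of the matrix lemma that is needed, and it settles the sign bookkeeping that the paper leaves muddled (the paper's statement of Lemma~\ref{szulc_lemma} says ``no nonnegative eigenvalues'' and its proof of the proposition asks for $\Psi_{\sigma}^{\dagger}\Phi$ to be positive semi-definite, while the hypothesis actually used, and the sharp one as your sweep of $-\tfrac{t}{1-t}$ over $(-\infty,0)$ shows, is the absence of negative real eigenvalues).
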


\begin{proof}
Each measure on the geodesic $\mu_t$ will be supported on a new set of vectors, namely \\ ${\{(1-t)\varphi_i+t\psi_{\sigma(i)}\}_{i=1}^N,}$ and will be a probabilistic frame provided this set of vectors spans $\Rd$. Equivalently, $\mu_t$ will be a probabilistic frame if the probabilistic frame operator $S_{\mu_t}$ is positive definite. Let $P_{\sigma}$  be the $N\times N$ permutation matrix corresponding to $\sigma\in \Pi_N$, where now $\sigma$ is the optimal permutation for the Wasserstein distance.  Let $\Psi_{\sigma}=P_{\sigma}\Psi$.  A quick calculation shows:
$$S_{\mu_t}=\frac{1}{N}\left((1-t)\Phi^{\top}+t\Psi_{\sigma}^{\top}\right)\left((1-t)\Phi+t\Psi_{\sigma}\right).$$
\noindent $\Psi$ and $\Psi_{\sigma}$ have rank $d$, and to show that $S_{\mu_t}$ is positive definite, it remains to prove that every matrix in the set  $h(\Phi,\Psi_{\sigma}):=\{(1-t)\Phi+t\Psi_{\sigma}\}_{t\in [0,1]}$ has rank $d$.  By Lemma \ref{szulc_lemma}, a sufficient condition for this to be true is that $\Psi_{\sigma}^{\dagger}\Phi$ be positive semi-definite.  Finally, we note that if $\{(\varphi_i,\psi_i)\}_{i=1}^N$ is a cyclically monotone set, then $P_{\sigma} = I$, the identity, is an optimal permutation, and then $\Psi_{\sigma}^{\dagger}\Phi=\Psi^{\dagger}\Phi$ is positive definite by assumption.
\end{proof}

\begin{proof}{\bf Proof of Theorem~\ref{first_geod_thm}}

 With Lemma \ref{path_bound} showing that measures on the geodesic have finite second moment and Proposition \ref{proposition_DP} showing that the support of these measures spans $\Rd,$ Theorem \ref{first_geod_thm} is now proved. 
 \end{proof}

  Certain dual frame pairs immediately satisfy the conditions laid out in Theorem \ref{first_geod_thm}.
\begin{proposition}\label{can_dual}
If $\{\varphi_i\}_{i=1}^N$ is the canonical dual frame to $\{\psi_i\}_{i=1}^N$, then $\{(\varphi_i,\psi_i)\}_{i=1}^N$ is cyclically monotone.
\end{proposition}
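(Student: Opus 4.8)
The plan is to unwind both definitions and reduce the claim to an elementary Cauchy--Schwarz estimate. Writing $S_\Psi = \Psi^\top\Psi$ for the frame operator of $\{\psi_i\}_{i=1}^N$, the canonical dual is $\varphi_i = S_\Psi^{-1}\psi_i$, so cyclical monotonicity of $\{(\varphi_i,\psi_i)\}_{i=1}^N$ amounts to verifying, for every finite sub-collection and every permutation $\sigma$ of its index set, the inequality
$$\sum_i \ip{S_\Psi^{-1}\psi_i}{\psi_i} \geq \sum_i \ip{S_\Psi^{-1}\psi_i}{\psi_{\sigma(i)}}.$$
Since a finite subset of the finite set $S=\{(\varphi_i,\psi_i)\}_{i=1}^N$ is merely a sub-collection, it suffices to treat the full index set $\ixset{N}$; the argument is identical on any subset, so I would state it once for the whole collection.

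The key observation is that $S_\Psi$ is symmetric and positive definite, hence so is $S_\Psi^{-1}$, and therefore $S_\Psi^{-1}$ admits a symmetric positive-definite square root $S_\Psi^{-1/2}$. Setting $u_i := S_\Psi^{-1/2}\psi_i$, symmetry of $S_\Psi^{-1/2}$ gives $\ip{S_\Psi^{-1}\psi_i}{\psi_j} = \ip{u_i}{u_j}$ for all $i,j$. Thus the pairing $\ip{\varphi_i}{\psi_j}$ becomes a genuine Gram inner product of the auxiliary vectors $u_i$, and the displayed inequality reduces to
$$\sum_i \norm{u_i}^2 \geq \sum_i \ip{u_i}{u_{\sigma(i)}}.$$

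This last inequality is immediate: by Cauchy--Schwarz and the arithmetic--geometric mean inequality,
$$\ip{u_i}{u_{\sigma(i)}} \leq \norm{u_i}\,\norm{u_{\sigma(i)}} \leq \half\paren{\norm{u_i}^2 + \norm{u_{\sigma(i)}}^2},$$
and summing over $i$, using that $\sigma$ is a permutation so that $\sum_i \norm{u_{\sigma(i)}}^2 = \sum_i \norm{u_i}^2$, yields exactly $\sum_i \ip{u_i}{u_{\sigma(i)}} \leq \sum_i \norm{u_i}^2$, as required. I expect no genuine obstacle here: the only points requiring care are recording that the canonical dual is realized by the \emph{symmetric} operator $S_\Psi^{-1}$ (so that the substitution produces a legitimate Gram inner product rather than a general bilinear form) and noting that the finite-subset quantifier in the definition of cyclical monotonicity is absorbed uniformly by the same estimate.
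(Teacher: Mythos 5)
Your proof is correct, and it reaches the conclusion by a genuinely different mechanism than the paper's. The paper also starts from $\varphi_i=S^{-1}\psi_i$ with $S=\Psi^{\top}\Psi$, but then packages the defect $\sum_{i}\ip{\varphi_i}{\psi_i-\psi_{\sigma(i)}}$ as a trace, $\Tr\paren{(I_N-P_{\sigma})\Psi S^{-1}\Psi^{\top}}$, and derives nonnegativity from the structure of $\Psi S^{-1}\Psi^{\top}$ as the orthogonal projection coming from the Moore--Penrose pseudoinverse (the paper actually identifies this matrix with the diagonal matrix $I_N^d$, which is accurate only up to an orthogonal change of basis --- e.g.\ for $\psi_1=\psi_2=1$ in $\R^1$ the matrix is $\frac{1}{2}\bigl(\begin{smallmatrix}1&1\\1&1\end{smallmatrix}\bigr)$ --- but the projection property is what drives the inequality). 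You never invoke that projection identity: you factor $S^{-1}=S^{-1/2}S^{-1/2}$, observe that $\ip{\varphi_i}{\psi_j}=\ip{u_i}{u_j}$ with $u_i=S^{-1/2}\psi_i$, and finish with termwise Cauchy--Schwarz, AM--GM, and the rearrangement $\sum_i\norm{u_{\sigma(i)}}^2=\sum_i\norm{u_i}^2$. In effect both proofs establish $\Tr(P_{\sigma}G)\leq\Tr(G)$ for the same matrix $G=\Psi S^{-1}\Psi^{\top}=[[\ip{u_i}{u_j}]]$, but you use only that $G$ is a Gram matrix, i.e.\ that $S^{-1}$ is symmetric positive semidefinite, not that it is idempotent. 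That buys two things: first, your argument proves the stronger statement that $\{(A\psi_i,\psi_i)\}_{i=1}^N$ is cyclically monotone for \emph{every} symmetric positive semidefinite $A$, consistent with Rockafellar's theorem, since $x\mapsto Ax$ is then the gradient of a convex quadratic; second, your termwise estimate applies verbatim to any finite sub-collection, so it discharges the ``every finite subset'' quantifier in the paper's definition of cyclic monotonicity, which the paper's trace computation handles only for the full index set (it does extend, by letting $\sigma$ fix the omitted indices, but this is left unsaid). What the paper's route buys in exchange is brevity: once the pseudoinverse identity is granted, the proof is a four-line trace computation.
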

\begin{proof}
Let $S=\Psi^{\top}\Psi$.  Then suppose that $\Phi^{\top}=S^{-1}\Psi^{\top}$. For any permutation $\sigma\in \Pi_N$, let $P_\sigma$ denote the matrix such that for $$\forall x= [x_1\dots x_N]^{\top}\in\R^N,\quad P_{\sigma}x=[x_{\sigma(1)}\dots x_{\sigma(N)}]^{\top}.$$

Then, 

\begin{align*}
\sum_{i=1}^N \ip{\varphi_i}{\psi_i-\psi_{\sigma(i)}}	&=	\sum_{i=1}^N \ip{S^{-1}\psi_i}{\psi_i-\psi_{\sigma(i)}} \\
													&=	\sum_{i=1}^N  (\psi_i-\psi_{\sigma(i)})^{\top}S^{-1}\psi_i\\
													&=	\Tr((\Psi-P_{\sigma}\Psi)S^{-1}\Psi^{\top})\\
													&=	\Tr((I_N-P_{\sigma})\Psi S^{-1}\Psi^{\top})\\
													&\geq 0
\end{align*}
Here we use the fact that $\Psi S^{-1}\Psi^{\top}=I_N^d,$ the $N\times N$ diagonal matrix with $d$ leading ones on the diagonal and zeros else, because $S^{-1}\Psi^{\top}$ is the Moore-Penrose pseudoinverse of $\Psi$.  Therefore, the identity is an optimal permutation, i.e., the set $\{(\varphi_i,\psi_i)\}_{i=1}^N$ is cyclically monotone.
\end{proof}

\begin{proposition}\label{any_dual} Let  $\{\beta_i\}_{i=1}^N\subset\Rd$ be such  that $\{(\beta_i,\psi_i)\}_{i=d+1}^N$ is cyclically monotone. Then use $\{\beta_i\}_{i=1}^N$ to define $\{\varphi_i\}_{i=1}^N,$ one of the dual frames to $\{\psi_i\}_{i=1}^N$ as given in \eqref{christensen_duals}.  Then $\{(\varphi_i,\psi_i)\}_{i=1}^N$ is cyclically monotone.
\end{proposition}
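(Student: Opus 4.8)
The plan is to reduce cyclical monotonicity of the constructed dual pair to two separately nonnegative contributions: one coming from the canonical dual, which Proposition \ref{can_dual} already handles, and one coming from the ``free'' correction term, which is where the hypothesis on the $\beta_i$ must be spent.

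First I would rewrite \eqref{christensen_duals} with the roles of the two frames interchanged, since here $\{\psi_i\}$ is the frame and $\{\varphi_i\}$ its dual. Writing $S:=\Psi^{\top}\Psi$ and $\tilde\varphi_i:=S^{-1}\psi_i$ for the canonical dual vectors, the construction reads $\varphi_i=\tilde\varphi_i+\gamma_i$ with correction $\gamma_i:=\beta_i-\Sum{k}{N}\ip{S^{-1}\psi_i}{\psi_k}\beta_k$. Since cyclical monotonicity of $\{(\varphi_i,\psi_i)\}_{i=1}^N$ is equivalent to the identity maximizing $\sigma\mapsto\Sum{i}{N}\ip{\varphi_i}{\psi_{\sigma(i)}}$ over $\Pi_N$, the goal is the inequality
$$\Sum{i}{N}\ip{\varphi_i}{\psi_i-\psi_{\sigma(i)}}=\Sum{i}{N}\ip{\tilde\varphi_i}{\psi_i-\psi_{\sigma(i)}}+\Sum{i}{N}\ip{\gamma_i}{\psi_i-\psi_{\sigma(i)}}\ \geq\ 0 \qquad \text{for all }\sigma\in\Pi_N.$$
For the canonical summand I would invoke Proposition \ref{can_dual} verbatim: it gives $\Sum{i}{N}\ip{\tilde\varphi_i}{\psi_i-\psi_{\sigma(i)}}=\Tr\big((I_N-P_\sigma)\,\Psi S^{-1}\Psi^{\top}\big)\geq 0$, the nonnegativity being the statement that the orthogonal projection $G:=\Psi S^{-1}\Psi^{\top}$ onto the column space of $\Psi$ satisfies $\Tr(GP_\sigma)\leq\Tr(G)$ for every permutation (a Cauchy--Schwarz estimate on $\Tr(U^{\top}P_\sigma U)$, writing $G=UU^{\top}$ with $U$ having orthonormal columns). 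Thus the whole burden shifts onto the correction term.

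For the correction term I would first record its defining feature. Collecting the $\gamma_i$ into $\Gamma:=[\gamma_1\cdots\gamma_N]$ and the $\beta_i$ into $B:=[\beta_1\cdots\beta_N]$, the formula above is exactly $\Gamma=B(I_N-G)$, and since $G\Psi=\Psi S^{-1}\Psi^{\top}\Psi=\Psi$ we get $\Gamma\Psi=B(I_N-G)\Psi=0$, i.e. $\Sum{i}{N}\gamma_i\psi_i^{\top}=0$. In particular $\Sum{i}{N}\ip{\gamma_i}{\psi_i}=\Tr\big(\Sum{i}{N}\gamma_i\psi_i^{\top}\big)=0$, so the remaining inequality collapses to $\Sum{i}{N}\ip{\gamma_i}{\psi_{\sigma(i)}}\leq 0$ for all $\sigma$, which is precisely cyclical monotonicity of the correction pair $\{(\gamma_i,\psi_i)\}_{i=1}^N$. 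This is where the hypothesis must enter: the identity $\Gamma\Psi=0$ is $d^2$ linear relations, so upon fixing a basis among the $\psi_i$ only the $N-d$ correction vectors indexed $d+1,\dots,N$ are free and $\gamma_1,\dots,\gamma_d$ are determined by them, and the assumed cyclical monotonicity of $\{(\beta_i,\psi_i)\}_{i=d+1}^N$ is the input pinning the optimal permutation on these free indices to the identity. Adding this nonnegative correction contribution to the canonical one then yields the claim.

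The hard part is exactly this last reduction: propagating cyclical monotonicity from the free directions indexed $d+1,\dots,N$ to the full correction pair $\{(\gamma_i,\psi_i)\}_{i=1}^N$. Two points demand care. First, the passage $\beta_i\mapsto\gamma_i$ is the nontrivial projection $B\mapsto B(I_N-G)$, so one must check that the monotonicity hypothesis is formulated for the quantities that actually survive projection; cleanest is to normalize the parameters so that $\Sum{k}{N}\ip{S^{-1}\psi_i}{\psi_k}\beta_k=0$, i.e. $\gamma_i=\beta_i$, which is harmless because distinct $\beta$ giving the same $\Gamma$ produce the same dual. Second, one must verify that the constrained coordinates $\gamma_1,\dots,\gamma_d$ forced by $\Gamma\Psi=0$ do not spoil the inequality for permutations $\sigma$ that mix free and constrained indices. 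I expect this to be the one genuinely delicate step; everything else is the routine splitting into a canonical part controlled by Proposition \ref{can_dual} and a free part controlled by the hypothesis.
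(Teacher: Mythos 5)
Your setup is sound and in fact reproduces the paper's own decomposition: writing $\varphi_i=S^{-1}\psi_i+\gamma_i$ and splitting $\sum_{i=1}^N\ip{\varphi_i}{\psi_i-\psi_{\sigma(i)}}$ into a canonical contribution (nonnegative by Proposition \ref{can_dual}) plus a correction contribution is exactly what the paper does in trace form, and your structural observations $\Gamma=B(I_N-G)$, $\Gamma\Psi=0$, hence $\sum_{i=1}^N\ip{\gamma_i}{\psi_i}=0$, are correct. The genuine gap is that you stop at precisely the step that carries all the content of the proposition: you never prove that $\sum_{i=1}^N\ip{\gamma_i}{\psi_{\sigma(i)}}\leq 0$ for every $\sigma\in\Pi_N$; you only announce it as ``the one genuinely delicate step.'' Since the canonical part is already Proposition \ref{can_dual}, this unproved inequality \emph{is} the proposition, so the proposal as written establishes nothing new. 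Moreover, the repair you sketch does not work: normalizing so that $\gamma_i=\beta_i$ amounts to replacing $B$ by $B(I_N-G)$, but the hypothesis is cyclical monotonicity of $\{(\beta_i,\psi_i)\}_{i=d+1}^N$ for the \emph{original} parameters, and that property is not invariant under this substitution --- each $\gamma_i$ mixes all of the $\beta_k$, so cyclical monotonicity of $\{(\beta_i,\psi_i)\}_{i>d}$ gives no control on $\{(\gamma_i,\psi_i)\}_{i>d}$. After your normalization the hypothesis is no longer available to you, and the argument becomes circular.

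What closes this step in the paper is a concrete structural identity rather than a soft invariance argument: the paper invokes $\Psi S^{-1}\Psi^{\top}=I_N^d$ (the $N\times N$ diagonal matrix with $d$ leading ones, asserted in the proof of Proposition \ref{can_dual}), so that $I_N-G$ is the coordinate projection annihilating the first $d$ coordinates. Under that identity your correction vectors satisfy $\gamma_i=0$ for $i\leq d$ and $\gamma_i=\beta_i$ for $i>d$, the correction contribution becomes literally $\sum_{i=d+1}^N\ip{\psi_i-\psi_{\sigma(i)}}{\beta_i}$, and the hypothesis is applied to it directly; this is also where the index range $d+1,\dots,N$ in the statement comes from. (One may well question that identity --- in general $\Psi S^{-1}\Psi^{\top}$ is only a rank-$d$ orthogonal projection, not diagonal --- but it is the mechanism the paper's proof runs on, and your outline supplies no substitute for it.) In your more general framework, where $G$ is an arbitrary rank-$d$ orthogonal projection, the constrained coordinates $\gamma_1,\dots,\gamma_d$ do not vanish, and there is no evident way to propagate a hypothesis on $\{\beta_i\}_{i>d}$ alone to the full correction pair over all of $\Pi_N$; identifying and proving such a mechanism is exactly what is missing from your proposal.
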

\begin{proof}
Take $\{\varphi_i\}_{i=1}^N$ to be a dual of the form given in Equation \eqref{christensen_duals}.  Let $W$ be the matrix whose rows are the $\{\beta_i\}_{i=1}^N$.  Then, noting that $\Phi^{\top}=(S^{-1}\Psi^{\top} + W^{\top}(I_N -\Psi S^{-1}\Psi^{\top}))$,
\begin{align*}
\sum_{i=1}^N \ip{\psi_i-\psi_{\sigma(i)}}{\varphi_i}	&=	\Tr((I_N-P_{\sigma})\Psi\Phi^{\top}) \\
													&=	\Tr((I_N-P_{\sigma})\Psi(S^{-1}\Psi^{\top} + W^{\top}(I_N -\Psi S^{-1}\Psi^{\top})))\\
													&=	\Tr((I_N-P_{\sigma})I_N^d + (I_N-P_{\sigma})\Psi W^{\top}(I_N -I_N^d))\\
													&=	\Tr((I_N-P_{\sigma})I_N^d)+\sum_{i=d+1}^N\ip{\psi_i-\psi_{\sigma(i)}}{\beta_i}\\
													&\geq 0
\end{align*}
\noindent Therefore, under these conditions, $\{(\varphi_i,\psi_i)\}_{i=1}^N$ is cyclically monotone.
\end{proof}

\begin{proposition}\label{dual_discrete} If $\{\varphi_i\}_{i=1}^N$ is the canonical dual frame to $\{\psi_i\}_{i=1}^N$, or if $\{\varphi_i\}_{i=1}^N$ is a dual frame to $\{\psi_i\}_{i=1}^N$ of the form given in \eqref{christensen_duals}, with the $\{h_i\}_{i=1}^N$ ordered so that $\{(h_i,\psi_i)\}_{i=d+1}^N$ is cyclically monotone, then $\Psi_{\sigma}^{\dagger}\Phi$ is positive definite, where $\sigma$ is the optimal permutation for the Wasserstein distance. Consequently, any path on the geodesic joining $\{\psi_i\}_{i=1}^N$ and $\{\varphi_i\}_{i=1}^N$ is a probabilistic frame. 
\end{proposition}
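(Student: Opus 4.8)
The plan is to read this proposition as an assembly of the three preceding results rather than as a fresh computation: Propositions~\ref{can_dual} and~\ref{any_dual} supply the cyclic monotonicity, the reconstruction identity supplies the positive definiteness of $\Psi_\sigma^{\dagger}\Phi$, and Theorem~\ref{first_geod_thm} (through Proposition~\ref{proposition_DP} and Lemma~\ref{path_bound}) delivers the conclusion.

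First I would dispose of the cyclic monotonicity hypothesis in each of the two cases. When $\{\varphi_i\}_{i=1}^N$ is the canonical dual to $\{\psi_i\}_{i=1}^N$, Proposition~\ref{can_dual} gives directly that $\{(\varphi_i,\psi_i)\}_{i=1}^N$ is cyclically monotone. When $\{\varphi_i\}_{i=1}^N$ is the dual built from $\{h_i\}_{i=1}^N$ as in~\eqref{christensen_duals}, I would identify $\{h_i\}$ with the free vectors $\{\beta_i\}$ of Proposition~\ref{any_dual}; the hypothesis that $\{(h_i,\psi_i)\}_{i=d+1}^N$ is cyclically monotone then lets Proposition~\ref{any_dual} conclude that the full collection $\{(\varphi_i,\psi_i)\}_{i=1}^N$ is cyclically monotone. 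In both cases cyclic monotonicity holds, which, exactly as in the proof of Proposition~\ref{proposition_DP}, forces the identity to be an optimal permutation for the Wasserstein distance between $\mu_\Phi$ and $\mu_\Psi$, since minimizing $\sum_i\norm{\varphi_i-\psi_{\sigma(i)}}^2$ is equivalent to maximizing $\sum_i\ip{\varphi_i}{\psi_{\sigma(i)}}$. Hence $\sigma$ may be taken to be the identity, $P_\sigma=I$, and $\Psi_\sigma=P_\sigma\Psi=\Psi$, so that $\Psi_\sigma^{\dagger}\Phi=\Psi^{\dagger}\Phi$.

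Next I would compute $\Psi^{\dagger}\Phi$. Because $\{\psi_i\}_{i=1}^N$ is a frame for $\Rd$, its analysis operator $\Psi$ has full column rank $d$, so $S:=\Psi^{\top}\Psi$ is invertible and positive definite and $\Psi^{\dagger}=S^{-1}\Psi^{\top}$. Since $\{\varphi_i\}_{i=1}^N$ is a dual frame to $\{\psi_i\}_{i=1}^N$, the reconstruction identity $x=\sum_{i=1}^N\ip{x}{\psi_i}\varphi_i$ reads $\Phi^{\top}\Psi=I$ in matrix form, and transposing gives $\Psi^{\top}\Phi=I$. Therefore $\Psi_\sigma^{\dagger}\Phi=\Psi^{\dagger}\Phi=S^{-1}\Psi^{\top}\Phi=S^{-1}$, which is positive definite. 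In particular it has no negative eigenvalues, so both hypotheses of Theorem~\ref{first_geod_thm} are met and every measure on the geodesic joining $\mu_\Psi$ and $\mu_\Phi$ is a probabilistic frame.

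The genuinely substantive work is already carried out in Propositions~\ref{can_dual},~\ref{any_dual}, and~\ref{proposition_DP}, so what remains here is essentially bookkeeping. The main obstacle is therefore not an estimate but keeping the primal/dual roles straight and correctly distinguishing $\Psi_\sigma^{\dagger}\Phi$ from $\Psi^{\dagger}\Phi$: the positive definiteness statement is about the \emph{optimally permuted} operator, and it is precisely the cyclic monotonicity established in the first step that collapses $\Psi_\sigma$ back to $\Psi$ and lets the reconstruction identity finish the argument.
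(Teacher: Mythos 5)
Your proposal is correct and follows essentially the same route as the paper's proof: both compute $\Psi_\sigma^{\dagger}\Phi=(\Psi^{\top}\Psi)^{-1}\Psi^{\top}\Phi=(\Psi^{\top}\Psi)^{-1}$ via the dual-frame identity $\Psi^{\top}\Phi=I$, and both reduce $\sigma$ to the identity by invoking Propositions~\ref{can_dual} and~\ref{any_dual} for cyclic monotonicity, then conclude through Theorem~\ref{first_geod_thm}. The only difference is cosmetic ordering (you establish monotonicity before the pseudoinverse computation; the paper does the reverse).
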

\begin{proof}
By definition,
$$\Psi_{\sigma}^{\dagger}	=	(P_{\sigma}\Psi)^{\dagger}
						=  (\Psi^{\top}P_{\sigma}^{\top}P_{\sigma}\Psi)^{-1}\Psi^{\top}P_{\sigma}^{\top}
						=  (\Psi^{\top}\Psi)^{-1}\Psi^{\top}P_{\sigma}^{\top}$$
This is a permutation of the matrix whose columns are canonically dual to the rows of $\Psi_{\sigma}$.  If $\{\varphi_i\}_{i=1}^N$ is any dual of $\{\psi_i\}_{i=1}^N$, then $\Psi^{\top}\Phi=I_d.$ Therefore, if $\sigma$ is the identity, then $\Psi_{\sigma}^{\dagger}\Phi=(\Psi^{\top}\Psi)^{-1}\Psi^{\top}\Phi=(\Psi^{\top}\Psi)^{-1}$, which is positive definite.  It remains to show that the optimal permutation is the identity.  But this is clear: Proposition \ref{can_dual} shows that if $\{\varphi_i\}_{i=1}^N$ is the canonical dual to $\{\psi_i\}_{i=1}^N$, then $\{(\varphi_i,\psi_i)\}_{i=1}^N$ is cyclically monotone, and Proposition \ref{any_dual} shows that if $\{\varphi_i\}_{i=1}^N$ is any dual to $\{\psi_i\}_{i=1}^N$ which meets the above condition, then $\{(\varphi_i,\psi_i)\}_{i=1}^N$ is cyclically monotone.
\end{proof}

There are other frame and dual-frame pairs which can easily be shown to meet the above conditions.  Consider the finite sequences $\{\varphi_i\}_{i=1}^N\subset \Rd$ and $\{\psi_i\}_{i=1}^N\subset \Rd$ with respective analysis operators $\Phi$ and $\Psi$.  Then the finite sequences are disjoint if $\Phi(\Rd) \bigcap \Psi(\Rd)=\{0\}$.  

\begin{proposition}\label{disjoint_prop}
If $\{\varphi_i\}_{i=1}^N$ and $\{\psi_i\}_{i=1}^N$ are disjoint frames for $\Rd$, associated canonically with the probabilistic frames $\mu_{\Phi}$ and $\mu_{\Psi}$, then every measure on the geodesic between $\mu_{\Phi}$ and $\mu_{\Psi}$ is a probabilistic frame.
\end{proposition}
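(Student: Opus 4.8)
Before turning to a proof, recall the structure set up in the discrete case. By the lemma of \cite{AGS2005} the optimal transport between $\mu_{\Phi}$ and $\mu_{\Psi}$ is realized by a permutation $\sigma\in\Pi_N$, so that, writing $\Psi_{\sigma}=P_{\sigma}\Psi$ as in Proposition~\ref{proposition_DP}, one has
$$S_{\mu_t}=\frac{1}{N}\left((1-t)\Phi^{\top}+t\Psi_{\sigma}^{\top}\right)\left((1-t)\Phi+t\Psi_{\sigma}\right).$$
The plan is therefore as follows. By Lemma~\ref{path_bound} every $\mu_t$ already lies in $P_2(\Rd)$, so it suffices to show that $S_{\mu_t}$ is positive definite for all $t\in[0,1]$, i.e.\ that $(1-t)\Phi+t\Psi_{\sigma}$ has full column rank $d$. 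At the endpoints this is immediate: $\Phi$ has rank $d$ because it is a frame, and $\Psi_{\sigma}=P_{\sigma}\Psi$ has rank $d$ since $P_{\sigma}$ is invertible. Thus the whole problem concentrates on the open interval $t\in(0,1)$.

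For the interior, I would argue by contradiction, exactly as in the Szulc-based arguments of Propositions~\ref{proposition_DP} and \ref{dual_discrete}. Suppose $\big((1-t)\Phi+t\Psi_{\sigma}\big)x=0$ for some $t\in(0,1)$ and some $x\neq 0$. Then $\Phi x=-s\,\Psi_{\sigma}x$ with $s=\tfrac{t}{1-t}>0$; equivalently $\Psi_{\sigma}^{\dagger}\Phi x=-s\,x$, so $-s$ is a negative eigenvalue of $\Psi_{\sigma}^{\dagger}\Phi$, and the vectors $\Phi x,\Psi_{\sigma}x\in\R^{N}$ are antiparallel. In particular $\Phi x$ is a nonzero element of $\Phi(\Rd)\cap\Psi_{\sigma}(\Rd)$. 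So it is enough either to prove directly that $\Phi(\Rd)\cap\Psi_{\sigma}(\Rd)=\{0\}$ (which would force $\Phi x$ and $\Psi_{\sigma}x$ to be linearly independent, hence never antiparallel), or, in the format of the sibling propositions, to show via Lemma~\ref{szulc_lemma} that $\Psi_{\sigma}^{\dagger}\Phi$ has no negative eigenvalue.

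The hard part, and the place where disjointness must genuinely interact with optimality, is that disjointness is \emph{given} for the pair $(\Phi,\Psi)$ but the geodesic pairs $\varphi_i$ with $\psi_{\sigma(i)}$, and $\Phi(\Rd)\cap\Psi_{\sigma}(\Rd)$ need not coincide with $\Phi(\Rd)\cap\Psi(\Rd)$ because permuting the rows of $\Psi$ changes its range. Reconciling the optimal reordering with the disjointness hypothesis is the crux. The tool I would use is the optimality of $\sigma$: by the lemma of \cite{AGS2005} the matched set $\{(\varphi_i,\psi_{\sigma(i)})\}_{i=1}^N$ is cyclically monotone, and applying this to transpositions yields the pairwise inequalities $\langle\varphi_i-\varphi_j,\ \psi_{\sigma(i)}-\psi_{\sigma(j)}\rangle\geq 0$ for all $i,j$. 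Projecting the rank-drop relation onto $x$ gives $\langle x,\varphi_i-\varphi_j\rangle=-s\,\langle x,\psi_{\sigma(i)}-\psi_{\sigma(j)}\rangle$, so the $x$-components of $\varphi_i-\varphi_j$ and $\psi_{\sigma(i)}-\psi_{\sigma(j)}$ have opposite sign while their full inner product is nonnegative. When $d=1$ this is already a contradiction unless every $\varphi_i$ is equal and every $\psi_{\sigma(i)}$ is equal, which forces both frames onto a single common line in $\R^N$ and so contradicts disjointness; this is the mechanism that makes the statement true and shows that \emph{both} hypotheses are needed. I expect the main obstacle to be upgrading this one-dimensional shadow argument to general $d$, i.e.\ leveraging the full (rather than projected) pairwise monotonicity together with $\Phi(\Rd)\cap\Psi(\Rd)=\{0\}$ to exclude an antiparallel pair $\Phi x,\Psi_{\sigma}x$ for the optimal matching. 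Once that reconciliation is established, $(1-t)\Phi+t\Psi_{\sigma}$ has rank $d$ for every $t\in[0,1]$, whence $S_{\mu_t}$ is positive definite and, with Lemma~\ref{path_bound}, every $\mu_t$ is a probabilistic frame.
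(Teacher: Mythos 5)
Your proposal is not yet a proof: everything hinges on the final ``reconciliation'' step --- ruling out an antiparallel pair $\Phi x$, $\Psi_{\sigma}x$ for the \emph{optimal} matching $\sigma$ using disjointness of the \emph{originally indexed} pair $(\Phi,\Psi)$ --- and you establish it only for $d=1$. That step is not a deferrable technicality; it is false in general, so no argument can close it. Take $d=2$, $N=4$, small $\epsilon>0$, and $\varphi_1=(1,0)^{\top}$, $\varphi_2=(2,0)^{\top}$, $\varphi_3=(3,0)^{\top}$, $\varphi_4=(4,\epsilon)^{\top}$, together with $\psi_1=(2,0)^{\top}$, $\psi_2=(3,0)^{\top}$, $\psi_3=(4,-\epsilon)^{\top}$, $\psi_4=(1,0)^{\top}$. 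Both families are frames, and the ranges $\Phi(\R^2)=\Span\{(1,2,3,4)^{\top},(0,0,0,1)^{\top}\}$ and $\Psi(\R^2)=\Span\{(2,3,4,1)^{\top},(0,0,1,0)^{\top}\}$ intersect only at the origin, so the frames are disjoint in the paper's sense. Yet the unique optimal matching pairs equal first coordinates ($\varphi_1\leftrightarrow\psi_4$, $\varphi_2\leftrightarrow\psi_1$, $\varphi_3\leftrightarrow\psi_2$, $\varphi_4\leftrightarrow\psi_3$; its cost is $4\epsilon^2$, while any other matching costs at least $1$), and the resulting geodesic at $t=\tfrac{1}{2}$ is supported on $\{(1,0)^{\top},(2,0)^{\top},(3,0)^{\top},(4,0)^{\top}\}$, which does not span $\R^2$. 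Disjointness is a property of the \emph{indexed} pair of analysis operators and is destroyed by the re-indexing that optimal transport performs --- exactly the failure mode you flagged; your $d=1$ mechanism (forcing both frames onto a common line) has no higher-dimensional analogue.

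For contrast, the paper's own proof takes an entirely different and more elementary route, and in doing so silently steps over the very issue you isolated. For $v\in\Rd$ it computes, \emph{with the identity pairing},
$$\sum_{i=1}^N\ip{v}{(1-t)\varphi_i+t\psi_i}^2=\norm{(1-t)\Phi v+t\Psi v}_{\R^N}^2\geq C\left[(1-t)^2\norm{\Phi v}^2+t^2\norm{\Psi v}^2\right]\geq CA_0(1-2t+2t^2)\norm{v}^2,$$
where $C>0$ exists because the two ranges meet only at the origin and $A_0$ is the smaller of the two lower frame bounds; no pseudoinverse, no eigenvalues, no appeal to Lemma~\ref{szulc_lemma}. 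But this estimate controls the path $t\mapsto\frac{1}{N}\sum_{i}\delta_{(1-t)\varphi_i+t\psi_i}$, which coincides with the Wasserstein geodesic only when the identity pairing is optimal --- something the disjointness hypothesis does not guarantee, as the example above shows. So your proposal is incomplete as a proof, but its diagnosis is correct and consequential: the step you could not supply is not merely unproven but false, and the proposition needs an extra hypothesis (e.g., that the identity is an optimal matching, as cyclical monotonicity provides in Proposition~\ref{proposition_DP}) for either your argument or the paper's to prove the statement about the actual geodesic.
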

\begin{proof}
Given $v\in\Rd$, consider:
\begin{align*}
\sum\limits_{i=1}^N\ip{v}{(1-t)\varphi_i+t\psi_i}^2	&=	\sum\limits_{i=1}^N\ip{v}{(1-t)\Phi^{\top}e_i+t\Psi^{\top}e_i}^2\\
											&=	\sum\limits_{i=1}^N\ip{(1-t)\Phi v+t\Psi v}{e_i}^2\\
											&=	\norm{(1-t)\Phi v+t\Psi v}_{\mathbf{R}^N}^2\\
											&\geq	C[(1-t)^2\norm{\Phi v}^2 +t^2\norm{\Psi v}^2]
\end{align*}
for some $C>0,$ since the frames are disjoint.  Since the two sequences in question are finite frames, choosing the minimum of the two lower frame bounds, say $A_0$, the last quantity can be bounded below by $(1-2t+2t^2)C\cdot A_0 \norm{v}^2$, yielding the result.
\end{proof}

\noindent Finally, in the following result control of the distance between the elements of a one frame and those of the canonical dual of the other by a coherence-like quantity guarantees the frame properties for the frames on the geodesic.
\begin{proposition}
Let $\{\psi_i\}_{i=1}^N$ be a dual frame to a frame $\{\varphi_i\}_{i=1}^N\subset S^{d-1}$.  Let $S_{\Phi}$ denote the frame operator.  For each $i$, let $z_i=\psi_i-S_{\Phi}^{-1}\varphi_i$, and let $a:=\min_{i\neq j}\ip{\varphi_i}{S_{\Phi}^{-1}(\varphi_i-\varphi_j)}$.  If $\max_j \norm{z_j}\leq\frac{a}{N}$, then the optimal $\sigma$ for the mass transport problem is the identity.
\end{proposition}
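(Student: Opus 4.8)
The plan is to reduce the claim to a single scalar inequality and then estimate the deviation of the chosen dual $\{\psi_i\}$ from the canonical dual $\{S_\Phi^{-1}\varphi_i\}$ using the vectors $z_i$. As noted immediately after \eqref{DPF_sum_form}, for the two uniformly weighted discrete measures $\mu_\Phi$ and $\mu_\Psi$ the optimal permutation is the one maximizing $\Sum{i}{N}\ip{\varphi_i}{\psi_{\sigma(i)}}$ over $\sigma\in\Pi_N$ (expanding $\norm{\varphi_i-\psi_{\sigma(i)}}^2$ and discarding the permutation-invariant sums $\Sum{i}{N}\norm{\varphi_i}^2$ and $\Sum{i}{N}\norm{\psi_i}^2$). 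Hence it suffices to prove that for every $\sigma\in\Pi_N$ one has $\Sum{i}{N}\ip{\varphi_i}{\psi_i-\psi_{\sigma(i)}}\ge 0$.

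Next I would substitute $\psi_i=S_\Phi^{-1}\varphi_i+z_i$ and split the sum as $T_1+T_2$, where $T_1:=\Sum{i}{N}\ip{\varphi_i}{S_\Phi^{-1}(\varphi_i-\varphi_{\sigma(i)})}$ is the canonical-dual contribution and $T_2:=\Sum{i}{N}\ip{\varphi_i}{z_i-z_{\sigma(i)}}$ is the correction. Both sums collapse onto the indices moved by $\sigma$, since a fixed point contributes zero. Writing $k$ for the number of such non-fixed points, each summand of $T_1$ has the form $\ip{\varphi_i}{S_\Phi^{-1}(\varphi_i-\varphi_{\sigma(i)})}$ with $\sigma(i)\neq i$, so by the definition of $a$ it is at least $a$; thus $T_1\ge ka$.

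For the correction term I would use that $\varphi_i\in S^{d-1}$, so $\norm{\varphi_i}=1$, together with Cauchy--Schwarz and the triangle inequality to get $|\ip{\varphi_i}{z_i-z_{\sigma(i)}}|\le\norm{z_i}+\norm{z_{\sigma(i)}}\le 2\max_j\norm{z_j}$. Summing over the $k$ non-fixed points and invoking the hypothesis $\max_j\norm{z_j}\le a/N$ gives $|T_2|\le 2k\,a/N$. Combining the two estimates yields $T_1+T_2\ge ka-\tfrac{2ka}{N}=ka\,\tfrac{N-2}{N}\ge 0$ for all $N\ge 2$, which shows that the identity maximizes $\Sum{i}{N}\ip{\varphi_i}{\psi_{\sigma(i)}}$ and is therefore an optimal permutation for the transport problem.

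The estimates themselves are routine; the real content lies in the split and in noticing that $a$ must be positive for the hypothesis to be non-vacuous (if $a\le 0$ the condition $\max_j\norm{z_j}\le a/N$ forces every $z_j=0$, i.e.\ the canonical dual, already covered by Proposition~\ref{can_dual}). The point to watch is the factor of $2$ produced by the triangle inequality: it is exactly what dictates the threshold $a/N$, and it causes the bound to degenerate to $\ge 0$ precisely at $N=2$, where uniqueness of the optimizer may be lost. For $N\ge 3$ and a genuinely non-canonical dual the inequality is strict whenever $\sigma$ is not the identity (such a $\sigma$ has $k\ge 2$ non-fixed points), so the identity is in fact the unique optimal permutation.
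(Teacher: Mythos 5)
Your proposal is correct and takes essentially the same approach as the paper's proof: both decompose $\psi_i=S_{\Phi}^{-1}\varphi_i+z_i$, restrict the sum to the $k=n_{\sigma}$ indices moved by $\sigma$, bound the canonical-dual part below by $ka$ and the correction term by $2ka/N$ via Cauchy--Schwarz with $\norm{\varphi_i}=1$, arriving at the same margin $ka(1-\tfrac{2}{N})\geq 0$. The only cosmetic differences are that you phrase the comparison as $\sum_i\ip{\varphi_i}{\psi_i-\psi_{\sigma(i)}}\geq 0$ rather than as $\Tr(\Psi^{\top}P_{\sigma}\Phi)\leq d=\Tr(\Psi^{\top}\Phi)$, and you dispatch the degenerate case by noting that $a\leq 0$ forces every $z_j=0$ (reducing to Proposition~\ref{can_dual}), where the paper instead asserts $a\geq 0$ and treats $a=0$ the same way.
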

\begin{proof}

First, we note that $a\geq 0.$  If $a=0,$ then our hypothesis guarantees that $\norm{z_i}=\norm{\psi_i-S_{\Phi}^{-1}\varphi_i}=0$ for all $i$, so that $\Psi$ is the canonical dual to $\Phi,$ and in this case our result holds by Proposition \ref{can_dual}. Therefore, it only remains to consider the case when $a>0.$

%
%
For all $u,v\in\Rd$, $\sum\limits_{i=1}^N\ip{u}{z_i}\ip{v}{\varphi_i}=0$.  Then given $\sigma\in S_N$, let $n_{\sigma}$ be the number of elements not fixed by $\sigma$.  Then if $\sigma$ is the identity, $n_{\sigma}=0$ and
$$\sum\limits_{i=1}^N\ip{\varphi_i}{S_{\Phi}^{-1}x_{\sigma(i)}+z_{\sigma(i)}} = Tr(\Psi^{\top}P_{\sigma}\Phi)	= d$$
If $\sigma$ is not the identity, then
\begin{align*}
\sum\limits_{i=1}^N\ip{\varphi_i}{S_{\Phi}^{-1}\varphi_{\sigma(i)}+z_{\sigma(i)}} &  =\sum\limits_{i\neq\sigma(i)}\ip{\varphi_i}{S_{\Phi}^{-1}(\varphi_{\sigma(i)}-\varphi_i) + 					z_{\sigma(i)}-z_i} + d\\
																 &	\leq d-an_{\sigma}+ \sum_{i\neq\sigma(i)}\ip{\varphi_i}{z_{\sigma(i)}-z_i}\\											
													 &	\leq d-(1-\frac{2}{N})an_{\sigma}			 
\end{align*}
Since, given the hypothesis, for all $i,j$, $\ip{\varphi_i}{z_j}\leq\norm{\varphi_i}\norm{z_j}=\norm{z_j}\leq\frac{a}{N}.$  
Thus $\Tr(\Psi^{\top}P_{\sigma}\Phi)\leq d-(1-\frac{2}{N})an_{\sigma}\leq d = Tr(\Psi^{\top}\Phi)$ for all $\sigma,$ and it follows that the identity is the optimal transport map for the Wasserstein metric.

\end{proof}

\subsection{Absolutely Continuous Probabilistic Frames}

The question of the nature of the optimal transport plan for the 2-Wasserstein distance is simpler for absolutely continuous measures.  From \cite[Theorem 6.2.10 and Proposition 6.2.13]{AGS2005}, which gather together a long list of characteristics, two key facts about this plan can be extracted, which are collected in the following lemma. 
\begin{lemma}\cite[Chapter 6.2.3]{AGS2005}\label{r_monotone_lemma}
If $\mu_0$ and $\mu_1$ are absolutely continuous probability measures in $P_2(\Rd)$, then there exists a unique optimal transport plan for the 2-Wasserstein distance which is induced by a transport map $r$.  This transport map is defined (and injective) $\mu_0$-a.e.  Indeed, there exists a $\mu_0$-negligible set $N\subset\Rd$ such that $\ip{r(x_1)-r(x_2)}{x_1-x_2}>0$ for all $x_1, x_2 \in \Rd\setminus N$.
\end{lemma}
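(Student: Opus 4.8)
The plan is to reconstruct the cited result in the spirit of Brenier's theorem. First I would establish existence of an optimal plan $\gamma_0\in\Gamma(\mu_0,\mu_1)$ for the quadratic cost by the direct method: $\Gamma(\mu_0,\mu_1)$ is tight because its marginals are fixed, hence weakly precompact by Prokhorov's theorem, and the functional $\gamma\mapsto\iint_{\RdxRd}\norm{x-y}^2\,d\gamma$ is weakly lower semicontinuous, so a minimizer exists. The key structural fact is that the support of any such minimizer is cyclically monotone: if it were not, one could select finitely many points $(x_i,y_i)$ from the support together with a permutation strictly lowering $\sum_i\norm{x_i-y_{\sigma(i)}}^2$, and a local rerouting of mass would then produce a competitor of strictly smaller cost, contradicting optimality. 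Expanding the squares shows that for the quadratic cost cyclical monotonicity of the support is exactly the inequality $\sum_i\ip{x_i}{y_i}\geq\sum_i\ip{x_i}{y_{\sigma(i)}}$ used elsewhere in this paper.

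Next I would invoke Rockafellar's theorem: every cyclically monotone subset of $\RdxRd$ is contained in the subdifferential $\partial\varphi$ of some proper, lower semicontinuous convex function $\varphi:\Rd\rightarrow\R\cup\{+\infty\}$, so $\gamma_0$ is concentrated on $\partial\varphi$. Here absolute continuity of $\mu_0$ enters decisively: a finite convex function is differentiable off a Lebesgue-null set, and at each point of differentiability $\partial\varphi(x)=\{\nabla\varphi(x)\}$ is a singleton. Since $\mu_0\ll\mathcal{L}^d$, this non-differentiability set is $\mu_0$-negligible, so $\partial\varphi$ is single-valued $\mu_0$-a.e. Consequently $\gamma_0$ is concentrated on the graph of $r:=\nabla\varphi$, that is $\gamma_0=(\iota,r)_{\#}\mu_0$ and $\mu_1=r_{\#}\mu_0$. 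Uniqueness follows because any optimal plan is concentrated on $\partial\psi$ for some convex $\psi$, and matching second marginals together with a.e. single-valuedness forces every optimal plan to coincide with $(\iota,r)_{\#}\mu_0$.

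It remains to obtain injectivity and the strict inequality. For injectivity I would run the argument symmetrically: since $\mu_1$ is also absolutely continuous, the optimal plan from $\mu_1$ to $\mu_0$ is induced by $\nabla\varphi^*$, where $\varphi^*$ is the Legendre transform of $\varphi$, and one checks $\nabla\varphi^*\circ\nabla\varphi=\iota$ $\mu_0$-a.e.; this exhibits a left inverse of $r$ on a set of full $\mu_0$-measure, so $r$ is injective there. Finally, let $N$ be the $\mu_0$-negligible set off which $\varphi$ is differentiable and $r$ is injective. On $\Rd\setminus N$ the gradient of a convex function is monotone, giving $\ip{r(x_1)-r(x_2)}{x_1-x_2}\geq 0$; if equality held for some $x_1\neq x_2$, then $\varphi$ would be affine along the segment joining them, forcing $\nabla\varphi(x_1)=\nabla\varphi(x_2)$ and contradicting injectivity. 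Hence the inequality is strict, which is precisely the stated conclusion.

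The main obstacle is the single-valuedness step, namely upgrading the a priori set-valued object $\partial\varphi$ to a genuine transport map $\mu_0$-a.e.; this is the heart of Brenier's theorem and the only place where absolute continuity of $\mu_0$ is truly used. The passage from injectivity to the strict inner-product inequality is the other delicate point, but it reduces cleanly to the affine-along-a-segment observation above. Since the statement is quoted from \cite{AGS2005}, in practice I would cite Theorem 6.2.10 and Proposition 6.2.13 there rather than reproduce these steps in full.
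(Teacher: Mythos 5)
The paper offers no proof of this lemma at all: it is imported verbatim from \cite{AGS2005} (Theorem 6.2.10 and Proposition 6.2.13), so the only meaningful comparison is with the argument in that source. Your reconstruction is precisely the standard Brenier--McCann argument underlying those cited results: existence of an optimal plan by the direct method, cyclical monotonicity of its support, Rockafellar's theorem placing that support inside $\partial\varphi$ for a convex potential $\varphi$, a.e.\ single-valuedness of $\partial\varphi$ from $\mu_0\ll\mathcal{L}^d$, injectivity via the Legendre transform $\nabla\varphi^*$ (the one place $\mu_1\ll\mathcal{L}^d$ is needed), and the affine-along-a-segment argument upgrading $\ip{r(x_1)-r(x_2)}{x_1-x_2}\geq 0$ to a strict inequality. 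That last step is correctly tied to the gradient structure rather than to injectivity alone, which is essential: a rotation by $\pi/2$ in the plane is monotone and injective yet satisfies $\ip{Jx_1-Jx_2}{x_1-x_2}=0$ everywhere, so strictness genuinely requires that $r$ be a gradient of a convex function.

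One step, however, is stated too quickly to be a proof: uniqueness. Knowing that \emph{every} optimal plan is concentrated on the subdifferential of \emph{some} convex function, hence induced by some map pushing $\mu_0$ to $\mu_1$, does not force all optimal plans to coincide; a priori different optimal plans could arise from different convex potentials, and ``matching second marginals'' cannot distinguish among the many maps pushing $\mu_0$ forward to $\mu_1$. The standard repair is the averaging argument: if $\gamma_1$ and $\gamma_2$ are optimal, so is $\tfrac{1}{2}(\gamma_1+\gamma_2)$, which by the same cyclical-monotonicity-plus-single-valuedness reasoning must be concentrated on the graph of a single map; since each $\gamma_i$ is absolutely continuous with respect to $\gamma_1+\gamma_2$, both are concentrated on that same graph, whence $\gamma_1=\gamma_2$ (equivalently, invoke McCann's uniqueness theorem for gradients of convex functions). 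A minor related point: the Rockafellar potential need not be finite everywhere, so the appeal to ``a finite convex function is differentiable off a Lebesgue-null set'' should instead note that the set where $\partial\varphi$ is nonempty but not a singleton lies in the union of the non-differentiability set of $\varphi$ in the interior of its domain and the boundary of that domain, both Lebesgue-null. Neither issue changes your architecture, which matches the cited source; and since the paper itself simply cites \cite{AGS2005}, your closing suggestion to do the same is exactly what the authors did.
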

\noindent Then we have the following result for absolutely continuous probabilistic frames:
\begin{proposition}\label{linearAC_case}
If $\mu_0$ and $\mu_1$ are absolutely continuous (with respect to Lebesgue measure) probabilistic frames for which there exists a linear, positive semi-definite deterministic coupling which minimizes the Wasserstein distance, then all measures on the geodesic between these frames have support which spans $\Rd$ and will therefore be probabilistic frames.
\end{proposition}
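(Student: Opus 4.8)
The plan is to exploit the linearity of the optimal coupling to reduce the geodesic to a one-parameter family of \emph{linear} pushforwards of $\mu_0$, and then to show that the resulting frame operators are congruent to $S_{\mu_0}$ by invertible matrices. First, by Lemma~\ref{r_monotone_lemma} the optimal transport plan is induced by a map $r$, and by hypothesis $r$ is linear and positive semi-definite, so $r(x) = Mx$ for a symmetric positive semi-definite matrix $M \in \R^{d\times d}$ (symmetry and semidefiniteness are exactly what make $x\mapsto Mx$ a Brenier map, namely the gradient of the convex potential $\half\, x^\top M x$). Writing $\gamma_0 = (\iota, r)_{\#}\mu_0$ for the optimal plan and substituting into the defining relation~\eqref{geodesic_meas}, the interpolation map collapses to
$$(1-t)x + t\,r(x) = \big((1-t)I + tM\big)x =: M_t\,x,$$
so that $\mu_t = (M_t)_{\#}\mu_0$, with $M_t$ symmetric.

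Next I would compute the frame operator of each geodesic measure directly from this pushforward:
$$S_{\mu_t} = \int_{\Rd} yy^\top\, d\mu_t(y) = \int_{\Rd} (M_t x)(M_t x)^\top\, d\mu_0(x) = M_t\,S_{\mu_0}\,M_t^\top = M_t\,S_{\mu_0}\,M_t.$$
Since $\mu_0$ is a probabilistic frame, $S_{\mu_0}$ is positive definite; hence $S_{\mu_t}$ is positive definite as soon as $M_t$ is invertible, because congruence by an invertible matrix preserves positive definiteness.

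It therefore remains---and this is the crux---to show $M_t$ is invertible for every $t \in [0,1]$. I would first upgrade $M$ from positive semi-definite to positive definite: the same computation at $t=1$ gives $S_{\mu_1} = M\,S_{\mu_0}\,M$, which is positive definite because $\mu_1$ is a probabilistic frame; together with $S_{\mu_0} \succ 0$ this forces $M$ to be invertible, and an invertible positive semi-definite matrix is positive definite. (Alternatively, one reads positive definiteness off the strict monotonicity in Lemma~\ref{r_monotone_lemma}, since for a linear map $\langle M(x_1-x_2),x_1-x_2\rangle>0$.) Given $M\succ 0$ with eigenvalues $\lambda_i>0$, the eigenvalues of $M_t=(1-t)I+tM$ are $(1-t)+t\lambda_i$, which are strictly positive for every $t\in[0,1]$; hence $M_t\succ 0$ and in particular is invertible. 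Thus $S_{\mu_t}$ is positive definite, $\supp(\mu_t)$ spans $\Rd$, and combined with Lemma~\ref{path_bound}, which guarantees $M_2^2(\mu_t)<\infty$, every $\mu_t$ is a probabilistic frame.

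I expect the only genuine subtlety to be the bookkeeping at the endpoint $t=1$: one must invoke the frame property of $\mu_1$ (or the strict monotonicity of the optimal map) to rule out a singular $M$, since the semidefiniteness hypothesis alone would permit $M_1=M$ to drop rank and thereby break the argument. Everything else reduces to the one-line frame-operator computation and a direct eigenvalue estimate for the convex combination $M_t$.
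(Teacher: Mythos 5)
Your proof is correct, and its skeleton is the same as the paper's: realize the geodesic measure as the linear pushforward $\mu_t = \big((1-t)I + tM\big)_{\#}\mu_0$, write $S_{\mu_t} = M_t S_{\mu_0} M_t^{\top}$ with $M_t = (1-t)I + tM$, and reduce everything to invertibility of $M_t$ for all $t \in [0,1]$ (plus Lemma~\ref{path_bound} for the second moments). The genuine difference is how that invertibility is established. The paper cites Lemma~\ref{szulc_lemma} (Szulc's theorem on ranks of convex combinations of matrices): the coupling matrix $A$ is nonsingular because $S_{\mu_1} = A S_{\mu_0} A^{\top}$ has rank $d$ --- the same endpoint observation you make --- and $A$ has no negative eigenvalues by the semi-definiteness hypothesis, so every $(1-t)I + tA$ is nonsingular. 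You bypass Szulc's lemma entirely by exploiting symmetry of $M$ (legitimate under the usual convention that positive semi-definite includes symmetric, and consistent with your Brenier-map remark): once $M$ is upgraded to positive definite via $S_{\mu_1}\succ 0$, the eigenvalues of $M_t$ are $(1-t) + t\lambda_i > 0$, and congruence by the invertible $M_t$ preserves positive definiteness. Your route is more elementary and self-contained; the paper's route via Szulc does not require $A$ to be symmetric, so it also covers the weaker reading of positive semi-definiteness ($x^{\top}Ax \geq 0$ without symmetry), where eigenvalues may be complex and a direct spectral argument is unavailable. Both proofs correctly isolate $t=1$ as the only point where semi-definiteness alone could fail, and your explicit appeal to Lemma~\ref{path_bound} for the second-moment half of the frame property is implicit in the paper's statement.
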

\begin{proof}
Given the assumptions, let $r(x)$ denote the linear transformation which induces the coupling $\mu_1=r_{\hash}\mu_0$. 
Defining $h_t(x)=(1-t)x+tr(x)$ $\mu_0$-a.e., the geodesic measure is given by
\begin{equation}\label{ac_geodesic_meas}
\mu_t := {h_t}_{\hash}\mu_0.
\end{equation}
\noindent Then $S_{\mu_t}=\int_{\Rd}h_t(x)h_t(x)^{\top}d\mu_0(x)$.  If $r(x)=Ax$ for some $A \in \mathbf{A}^{d\times d}$, then:
\begin{align*}
S_{\mu_t}	&= \int_{\Rd}((1-t)Ix+tAx)((1-t)Ix+tAx)^{\top}d\mu_0(x)\\
			&= ((1-t)I+tA)S_{\mu_0}((1-t)I+tA)^{\top}
\end{align*}  
Since $A$ must be nonsingular--recall that $S_{\mu_1}=AS_{\mu_0}A^{\top}$, which is certainly of rank $d$--by Lemma \ref{szulc_lemma}, $(1-t)I+tA$ will also nonsingular for all $t\in [0,1]$ provided that $A$ has no negative eigenvalues, as we assumed.
\end{proof}

\begin{example}
An example in which the assumptions of the above proposition hold is the case of nondegenerate Gaussian measures on $\Rd$.  Let $\mu_0$ and $\mu_1$ be zero-mean Gaussians. Let $r(x)=S_{\mu_1}^{\half}S_{\mu_0}^{-\half}x$.  According to a result in \cite{DowsonLandau}, if $X$ and $Y$ are two zero-mean random vectors with covariances $\Sigma_X$ and $\Sigma_Y$, respectively, then a lower bound for $E(\norm{X-Y}^2)$ is $\Tr[\Sigma_X +\Sigma_Y - 2(\Sigma_X \Sigma_Y)^\half]$, and the bound is attained, for nonsingular $\Sigma_X$, when $Y=\Sigma_X^{-\half}\Sigma_Y^{\half}X$, so that the coupling $r$ is an optimal positive definite linear deterministic coupling of $\mu_0$ and $\mu_1$. 
\end{example}

\noindent Now, given absolutely continuous probabilistic frames $\mu, \nu$ for $\Rd$, take $r(x)$ to be the optimal transport map pushing $\mu$ to $\nu$ guaranteed by Lemma \ref{r_monotone_lemma}. Define $$h_t(x)=(1-t)x + tr(x) \quad\textrm{for } t \in [0,1];$$ then $S_{\mu_t} = \int h_t(x) \otimes h_t(x) d\mu(x)$, with $\mu_t=(h_t)_{\hash}\mu$. Then we can state the following:

\begin{proposition}\label{ht_injective_prop}
Given two such probabilistic frames, there exists a set $N$ with $\mu(N)=0$ such that $h_t$ is injective for all $t\in[0,1]$ on $\supp(\mu)\setminus N$.
\end{proposition}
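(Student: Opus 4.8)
The plan is to exploit the strict monotonicity of the optimal map $r$ furnished by Lemma \ref{r_monotone_lemma} and to observe that injectivity of $h_t$ for every $t$ follows from a single strict-monotonicity estimate in which the $\mu$-negligible exceptional set can be chosen independently of $t$. Concretely, I would let $N$ be exactly the $\mu$-negligible set provided by Lemma \ref{r_monotone_lemma}, so that $r$ is defined on $\Rd\setminus N$ and $\ip{r(x_1)-r(x_2)}{x_1-x_2}>0$ for all distinct $x_1,x_2\in\Rd\setminus N$. Since $\supp(\mu)\setminus N\subset\Rd\setminus N$, it suffices to prove injectivity on the larger set.

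The main computation is to expand the pairing $\ip{h_t(x_1)-h_t(x_2)}{x_1-x_2}$ for two distinct points $x_1,x_2\in\Rd\setminus N$. Writing $h_t(x)=(1-t)x+tr(x)$ and using bilinearity gives
$$\ip{h_t(x_1)-h_t(x_2)}{x_1-x_2}=(1-t)\norm{x_1-x_2}^2+t\ip{r(x_1)-r(x_2)}{x_1-x_2}.$$
For each fixed $t\in[0,1]$ the first summand is nonnegative, vanishing only when $t=1$, and, by the choice of $N$, the second summand is nonnegative, vanishing only when $t=0$. Because the two terms cannot vanish simultaneously for a single value of $t$, the right-hand side is strictly positive for every $t\in[0,1]$ whenever $x_1\neq x_2$.

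I would then conclude by contradiction: if $h_t(x_1)=h_t(x_2)$ for some $t$ and some distinct $x_1,x_2\in\supp(\mu)\setminus N$, the left-hand side above would vanish, contradicting the strict positivity just established. Hence $h_t$ is injective on $\supp(\mu)\setminus N$ for every $t\in[0,1]$, with the same null set $N$ serving all $t$ simultaneously.

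I do not anticipate a serious obstacle; the content is essentially that a convex combination of the identity map and a strictly monotone map is again strictly monotone, hence injective. The only point requiring care is the \emph{uniformity in $t$}, namely that a single exceptional set $N$ works for the whole family $\{h_t\}_{t\in[0,1]}$ rather than a $t$-dependent set. This is automatic here because the monotonicity inequality of Lemma \ref{r_monotone_lemma} holds on one fixed set $\Rd\setminus N$ and the coefficients $(1-t)$ and $t$ are nonnegative, so no separate argument at the endpoints $t=0$ and $t=1$ is needed.
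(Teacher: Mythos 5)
Your proposal is correct and follows essentially the same route as the paper: take $N$ to be the negligible set from Lemma \ref{r_monotone_lemma}, expand $\ip{h_t(x_1)-h_t(x_2)}{x_1-x_2}=(1-t)\norm{x_1-x_2}^2+t\ip{r(x_1)-r(x_2)}{x_1-x_2}$, and invoke the strict monotonicity of $r$. Your write-up is in fact marginally cleaner than the paper's, since the paper divides by $t$ (tacitly assuming $t\neq 0$), whereas your observation that the two nonnegative terms cannot vanish simultaneously handles all $t\in[0,1]$ uniformly.
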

\begin{proof}
Given $x, y \in \supp(\mu)\setminus N$, with N as defined in Lemma \ref{r_monotone_lemma}, suppose $h_t(x)=h_t(y)$ for some $t\in[0,1]$.  Then, since:
\begin{align*}
0		& = \ip{h_t(x)-h_t(y)}{x-y} \\
		&= \ip{(1-t)(x-y) +t(r(x)-r(y))}{x-y}\\
		& = (1-t)\norm{x-y}^2 + t\ip{r(x)-r(y)}{x-y}
\end{align*}						
it follows that $$\ip{r(x)-r(y)}{x-y} = \frac{t-1}{t}\norm{x-y}^2.$$
This implies that $\ip{r(x)-r(y)}{x-y} \leq 0.$  However, from the proposition above, we also know that $\ip{r(x)-r(y)}{x-y} \geq 0$.  Therefore $\norm{x-y}=0$, and $h_t$ is injective on $\supp(\mu)\setminus N$.
\end{proof}

This injectivity claim is crucial for the main result of this section:

\begin{theorem}\label{cty_geodesic}
Let $\mu,\nu\in P_2^r(\Rd)$, and let $r$ be the unique optimal transport map for the $2-Wasserstein$ distance. Let $N$ be the set of measure zero define in Proposition \ref{ht_injective_prop}. If $r$ is continuous, and if $\supp(\mu)\setminus N$ contains an open set, then every geodesic measure $\mu_t$ is a probabilistic frame.
\end{theorem}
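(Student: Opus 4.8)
The plan is to verify, for each fixed $t\in[0,1]$, the two defining properties of a probabilistic frame for $\mu_t$: finite second moment and a support whose linear span is $\Rd$. The first is already supplied by Lemma~\ref{path_bound}, which gives $M_2^2(\mu_t)<\infty$, so the entire burden is to show that $\supp(\mu_t)$ spans $\Rd$, equivalently that $S_{\mu_t}$ is positive definite. Rather than estimate the spectrum of $S_{\mu_t}$ directly (as was done in the discrete case), I would establish the stronger topological fact that $\supp(\mu_t)$ \emph{contains a nonempty open subset} of $\Rd$. This suffices, since any nonempty open set contains an open ball and an open ball spans $\Rd$.

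Fix $t\in[0,1]$ and let $U\subset\supp(\mu)\setminus N$ be the nonempty open set furnished by the hypothesis. Recall $\mu_t=(h_t)_{\hash}\mu$ with $h_t(x)=(1-t)x+tr(x)$. Since $r$ is assumed continuous, $h_t$ is continuous, and by Proposition~\ref{ht_injective_prop} it is injective on $\supp(\mu)\setminus N\supseteq U$. The crucial tool is Brouwer's invariance of domain: a continuous injective map from an open subset of $\Rd$ into $\Rd$ is an open map and restricts to a homeomorphism onto its image. Applying this to $h_t|_U$, the image $h_t(U)$ is a nonempty open subset of $\Rd$, and $h_t|_U\colon U\to h_t(U)$ is a homeomorphism.

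It then remains to show $h_t(U)\subseteq\supp(\mu_t)$, which I would argue pointwise. Given $y\in h_t(U)$ and any open neighborhood $W\ni y$, the set $V:=W\cap h_t(U)$ is a nonempty open subset of $h_t(U)$, so its preimage $U':=(h_t|_U)^{-1}(V)$ is a nonempty open subset of $U\subseteq\supp(\mu)$; hence $\mu(U')>0$ by the definition of support. Because $h_t(U')=V\subseteq W$ we have $U'\subseteq h_t^{-1}(W)$, so $\mu_t(W)=\mu\bigl(h_t^{-1}(W)\bigr)\ge\mu(U')>0$. Thus every neighborhood of $y$ has positive $\mu_t$-mass, giving $y\in\supp(\mu_t)$. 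Consequently $\supp(\mu_t)$ contains the nonempty open set $h_t(U)$, and therefore spans $\Rd$; combined with Lemma~\ref{path_bound} and the characterization of probabilistic frames, this shows $\mu_t$ is a probabilistic frame for every $t\in[0,1]$.

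The main obstacle is the passage from ``continuous injective image'' to ``open image lying inside the support,'' and this is precisely where the two hypotheses earn their keep: invariance of domain demands a genuine open set, which is why $\supp(\mu)\setminus N$ containing an open set cannot be relaxed to merely containing a spanning finite subset, and the continuity of $r$ is what upgrades the $\mu$-a.e.\ injectivity of Proposition~\ref{ht_injective_prop} into an honest homeomorphism onto an open set. The remaining measurability and pushforward bookkeeping is routine once $h_t$ is known to be continuous, so I expect no difficulty there.
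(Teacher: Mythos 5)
Your proposal is correct and follows essentially the same route as the paper: both use continuity of $h_t$ (from continuity of $r$) together with the a.e.\ injectivity of Proposition~\ref{ht_injective_prop}, apply invariance of domain to the hypothesized open set $U\subset\supp(\mu)\setminus N$ to obtain an open image $h_t(U)$, and show this image lies in $\supp(\mu_t)$ by noting that preimages of neighborhoods under $h_t$ contain open sets of positive $\mu$-measure. The only cosmetic differences are the order of the two steps (the paper proves $h_t(\supp(\mu)\setminus N)\subset\supp(\mu_t)$ using continuity alone before invoking invariance of domain, while you invoke invariance of domain first) and your explicit citation of Lemma~\ref{path_bound} for the second moment, which the paper leaves implicit.
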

\begin{proof}
Since $r$ is continuous and, by Proposition \ref{ht_injective_prop}, injective outside a set $N$ of measure zero, so is $h_t$ for each $t$.  Let $x_0\in\supp(\mu)\setminus N$.  First, we show that for any $\epsilon>0$, $h_t^{-1}(B_{\epsilon}(h_t(x_0)))$ contains an open set containing $x_0$.

Since $h_t$ is continuous at any such $x_0$, given $\epsilon>0$, there exists $\delta>0$ such that $\forall x\in B_{\delta}(x_0)$, 
$\norm{h_t(x)-h_t(x_0)}< \epsilon$.  Hence for any $x\in B_{\delta}(x_0)$, $x\in h_t^{-1}(B_{\epsilon}(h_t(x_0)))$--i.e., $B_{\delta}(x_0)\subset h_t^{-1}(B_{\epsilon}(h_t(x_0)))$.

Then $\forall x_0\in\supp(\mu)\setminus N$, consider $B_{\frac{1}{k}}(h_t(x_0))$:
\begin{align*}
\mu_t(B_{\frac{1}{k}}(h_t(x_0)))	& =	\int\indicator{B_{\frac{1}{k}}(h_t(x_0))}(h_t(y))d\mu(y)\\
								& =	\int\indicator{h_t^{-1}(B_{\frac{1}{k}}(h_t(x_0)))}(y)d\mu(y)\\
								& = \mu(h_t^{-1}(B_{\frac{1}{k}}(h_t(x_0))))\\
								& >	0
\end{align*}
\noindent where the last inequality holds since $x_0\in\supp(\mu)$ and, as shown above, $h_t^{-1}(B_{\frac{1}{k}}(h_t(x0))))$ contains an open set containing $x_0$.  Thus, we have shown that for any $k\in\N$, the open ball of radius $\frac{1}{k}$ around $h_t(x_0)$ has positive $\mu_t$-measure, and therefore $h_t(x_0)$ lies in $\supp(\mu_t)$.  Thus $h_t(\supp(\mu)\setminus N)\subset\supp(\mu_t)$.

Therefore, since $h_t$ is injective by Proposition \ref{ht_injective_prop} above and continuous on $\supp(\mu)\setminus N$ and by assumption, there exists open set $U\subset\supp(\mu)\setminus N$, by invariance of domain, $h_t(U)\subset\supp(\mu_t)$ is open, we conclude that $h_{t\hash}\mu$ has support which spans $\Rd$.
\end{proof}

\noindent The question of when $r$ is continuous is the subject of ongoing research.  One example is when $\mu$ and $\nu$ are supported on a bounded convex subset of $\Rd$ \cite{FigalliDePhilippis2013}.

\section*{Acknowledgment}
Clare Wickman would like to thank the Norbert Wiener Center
for Harmonic Analysis and Applications for its support during
this research. 
Kasso Okoudjou was partially supported by a grant from the Simons Foundation ($\# 319197$ to Kasso Okoudjou), and ARO grant W911NF1610008. 

\bibliographystyle{amsplain}
\bibliography{Bibliography} 

\end{document}